\UseRawInputEncoding 
\documentclass[11pt,a4paper]{article}
\usepackage{amsfonts}
\usepackage{amssymb}
\usepackage{mathrsfs}
\usepackage{amsmath}
\usepackage{booktabs}
\usepackage{epsf,epsfig,amsfonts,amsgen,indentfirst}
\usepackage{amsmath,amstext,amsbsy,amsopn,amsthm,bbding,wasysym}
\usepackage{multicol,mathdots}
\usepackage{subfigure}
\usepackage[numbers,sort&compress]{natbib}
\allowdisplaybreaks

\newtheorem{theorem}{Theorem}[section]

\newtheorem{lemma}[theorem]{Lemma}

\newtheorem{problem}[theorem]{Problem}

\begin{document}
\title
{\LARGE \textbf{Solution to an open problem on the computational complexity of immanant
\thanks{Supported by NSFC (Nos. 12261071,  12571019) and NSF of Qinghai Province (No. 2025-ZJ-902T).} }}

\author{Xiangshuai Dong$^a$, Tingzeng Wu$^{b,c}$\thanks{{Corresponding author.\newline
\emph{E-mail address}: mathtzwu@163.com, a3566293588@163.com,  gaoxing@lzu.edu.cn
}}, Xing Gao$^{d,e}$ \\
{\small $^{a}$ School of Mathematical Sciences, Xiamen University,  }\\
{\small  Xiamen, 361005,  P.R.~China} \\
{\small $^{b}$ School of Mathematics and Statistics, Qinghai Minzu University, }\\
{\small  Xining, Qinghai 810007, P.R.~China} \\
{\small $^{c}$ Qinghai Institute of Applied Mathematics, Xining, Qinghai 810007, P.R.~China}\\
{\small $^{d}$ School of Mathematics and Statistics, Lanzhou University, }\\
{\small  Lanzhou, Gansu, 730000, P.R.~China}\\
{\small $^{e}$ Gansu Provincial Research Center for Basic Disciplines }\\
{\small   of Mathematics
and Statistics, Lanzhou, Gansu, 730070, P.R.~China}}
\date{}

\maketitle
\noindent {\bf Abstract:}
Immanants are a class of generalized matrix functions associated with the irreducible characters of the symmetric group. B\"{u}rgisser [SIAM J. Comput., 30 (2000), pp. 1023--1040] proved that the computation of hook immanants and immanants corresponding to rectangular Young diagrams of polynomially growing width is VNP-complete under $p$-projections. And  he posed an open problem: whether the family of immanants corresponding to rectangular Young diagrams of width $2$ is VNP-complete under $p$-projections. This paper gives a solution to this  problem. 
We prove that, over any field of characteristic zero, the immanant families associated with rectangular Young diagrams of width $2$ and of length $2$ are both VNP-complete under $p$-projections.

\noindent {\bf Keywords:}  Immanant, $p$-projection, Computational complexity, VNP-complete \\
\noindent {\bf AMS subject classifications:} 05E05,05E10, 68Q15, 68Q17

\section{Introduction}%
The definitions of computational complexity are given in the literature \cite{Bur1,Bur2,val}. Let $\mathbb{F}$ be a field of characteristic zero, and write $[n]=\{1,\ldots,n\}$. A {\em $p$-family} is a sequence of polynomials $(f_n)$, where $f_n\in \mathbb{F}[X_1, \ldots, X_{v(n)}]$, and the number of variables $v(n)$ and the degree $\deg f_n$ are both {\em $p$-bounded} functions of $n$, i.e., they are bounded by a polynomial in $n$. Let $L(f_n)$ denote the {\em complexity} of the polynomial $f_n \in \mathbb{F}[X_1, \ldots, X_{v(n)}]$, i.e., the minimum number of arithmetic operations (addition, subtraction, multiplication) needed to compute $f_n$ from the variables $X_i$ and constants in $\mathbb{F}$ using a straight‐line program.
A $p$-family $(f_n)$ is called {\em $p$-computable} if $L(f_n)$ is polynomially bounded in $n$. A $p$-family $(f_n)$ is called {\em $p$-definable} if there exists a $p$-bounded function $u(n)$ and a $p$-computable family $(g_n)$, with $g_n\in \mathbb{F}[X_1,\ldots,X_{v(n)},Y_1,\ldots,Y_{u(n)}]$, such that for all $n$,
$$
f_n(X_1, \ldots, X_{v(n)})
=\sum_{e=(e_1,\ldots,e_{u(n)})\in\{0,1\}^{u(n)}}
g_n(X_1,\ldots,X_{v(n)},e_1,\ldots,e_{u(n)}).
$$
All $p$-definable families form the complexity class $\mathrm{VNP}$. Let $(g_m)$ be another $p$-family, where $g_m\in \mathbb{F}[Y_1,\ldots,Y_{u(m)}]$. If there exists a $p$-bounded function $t:\mathbb{N}\to\mathbb{N}$ and substitutions $a_i\in \mathbb{F}\cup\{X_1,\ldots,X_{v(n)}\}$ such that
$$
f_n(X_1,\ldots,X_{v(n)})
=g_{t(n)}(a_1,\ldots,a_{u(t(n))}),
$$
then $(f_n)$ is called a {\em $p$-projection} of $(g_m)$, denoted $(f_n)\leq_p(g_m)$. In other words, each $f_n$ can be obtained from $g_{t(n)}$ by substituting variables with either original variables or constants from the field. If $(g_m)\in \mathrm{VNP}$ and every $(f_n)\in \mathrm{VNP}$ satisfies $(f_n)\leq_p(g_m)$, then $(g_m)$ is said to be {\em $\mathrm{VNP}$-complete} under $p$-projections. This paper uses the usual non‐uniform algebraic complexity model, so each member of a family may use field constants that depend on the family index. The definition of {\em $c$-reduction} is given in \cite{rug}. For a polynomial $f$, the complexity $L^g(f)$ with oracle $g$ is defined as the minimum number of arithmetic gates plus the number of queries to $g$ on previously computed values, needed to compute $f$ from variables and constants in $\mathbb{F}$. A $p$-family $(f_n)$ is reducible to $(g_m)$ under {\em $c$-reduction} if there exists a $p$-bounded function $t$ such that $L^{g_{t(n)}}(f_n)$ is polynomially bounded in $n$.

A {\em partition} of an integer $m$ is a sequence of positive integers $\lambda = (\lambda_1, \ldots, \lambda_s)$ with $\lambda_1\geq \cdots\geq\lambda_s$ and $\sum_i \lambda_i = m$, written $\lambda \vdash m$. A partition $\lambda = (\lambda_1, \ldots, \lambda_s)$ is in bijection with its {\em Young diagram} $\{(i,j)\mid 1\leq i\leq s,\ 1\leq j\leq\lambda_i\}$, which is a left‐justified arrangement of finitely many equal‐sized squares, where the $i$-th row has $\lambda_i$ squares. We call $|\lambda| := \sum_i \lambda_i$ the {\em size} of $\lambda$, and $s$ and $\lambda_1$ the {\em length} and {\em width} of $\lambda$, respectively. If a Young diagram corresponds to a partition with $\lambda_1 = \cdots = \lambda_s$, it is called a {\em rectangular} Young diagram. For a partition $\lambda$, denote by $\lambda^\top$ its {\em conjugate partition}; the Young diagram of $\lambda^\top$ is the transpose of that of $\lambda$. For example, the rectangular Young diagram of width $2$ corresponds to the partition
$$
  (2^n)=(\underbrace{2,\ldots,2}_{n}),
$$
i.e., a two‐column rectangle, while the partition $(n,n)$ corresponds to a two‐row rectangle. these two are conjugate to each other. Let $\lambda=(\lambda_1,\ldots,\lambda_s)$ and $\mu=(\mu_1,\ldots,\mu_t)$ be partitions, with the convention that parts beyond the length are zero. We write $\lambda\supseteq\mu$ if $\lambda_i\geq\mu_i$ for all $i\geq1$. The diagram obtained by removing the Young diagram of $\mu$ from that of $\lambda$ is denoted $\lambda/\mu$ and called a {\em skew Young diagram}. If a skew Young diagram has at most one square in each column, it is called a \textit{horizontal strip}. Let $G$ be a finite group and $V$ a finite‐dimensional vector space over $\mathbb{C}$. A mapping
\[
  \rho:G\longrightarrow \mathrm{GL}(V)
\]
such that $\rho(gh)=\rho(g)\rho(h)$ and $\rho(e)=I_V$ is called a {\em representation} of $G$ on $V$, or simply a representation of $G$. Here $\mathrm{GL}(V)$ denotes the group of all invertible linear transformations on $V$, $e$ is the identity element of $G$, and $I_V$ is the identity transformation on $V$. The {\em character} of the representation $\rho$ is the function on $G$ defined by
\[
  \chi_\rho(g)=\operatorname{tr}(\rho(g)),\qquad g\in G.
\]
If there is no subspace of $V$ other than $\{0\}$ and $V$ that is invariant under all linear transformations $\rho(g)$, then $\rho$ is called an {\em irreducible representation}. The character of an irreducible representation is called an {\em irreducible character}.

Let $S_m$ denote the symmetric group. Let $\lambda$ be a partition of $m$, and $\chi_\lambda$ an irreducible character of $S_m$. For an $m\times m$ matrix $B=[b_{ij}]$, the {\em immanant} of $B$ associated with $\chi_\lambda$ is defined by
\begin{eqnarray*}
  \operatorname{Imm}_\lambda(B)
  =\sum_{\pi\in S_m}\chi_{\lambda}(\pi)
    \prod_{i=1}^m b_{i\pi(i)}.
\end{eqnarray*}
When $\lambda=(1^m)$, $\chi_{\lambda}$ is the sign character, and $\operatorname{Imm}_\lambda(\cdot)$ is the determinant. When $\lambda=(m)$, $\chi_{\lambda}$ is the trivial character, and $\operatorname{Imm}_\lambda(\cdot)$ is the permanent.

Immanants have been extensively studied in many areas \cite{gou1,gou2,hai,lam,li,ste}, especially in graph theory; see \cite{chan,don1,don2,mer2,mer3,mer4,wu1,yu}. Valiant~\cite{val} proved that the permanent $p$-family $(\operatorname{per})$ is $\mathrm{VNP}$-complete under $p$-projections. Bürgisser~\cite{Bur2} further studied the computational complexity of immanants and showed that immanant families associated with hook Young diagrams and rectangular Young diagrams of polynomially bounded width are $\mathrm{VNP}$-complete. Mertens and Moore~\cite{mermoore} studied the complexity of the fermionant and constant‐width immanants. Curticapean~\cite{curticapean} gave a dichotomy result for immanant families. Bürgisser~\cite{Bur2} raised the following open problem concerning the more restrictive $p$-projections.

\begin{problem}(B\"urgisser, \cite{Bur2})\label{prob1}
Is the immanant family corresponding to rectangular Young diagrams of width $2$ $\mathrm{VNP}$-complete under $p$-projections?
\end{problem}

de Rugy-Altherre~\cite{rug} studied Problem \ref{prob1} and obtained the following result.

\begin{theorem}
For each positive integer $n$, the family $(\operatorname{Imm}_{(2^n)})_{n\geq 1}$ is $\mathrm{VNP}$-complete under $c$-reductions.
\end{theorem}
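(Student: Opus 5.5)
The plan has two parts: membership of $(\operatorname{Imm}_{(2^n)})$ in $\mathrm{VNP}$, and $\mathrm{VNP}$-hardness under $c$-reductions from a known $\mathrm{VNP}$-complete family.

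\emph{Membership.} Write $\operatorname{Imm}_{(2^n)}(B)=\sum_{\pi\in S_{2n}}\chi_{(2^n)}(\pi)\prod_i b_{i\pi(i)}$. I would encode $\pi$ by a $0/1$ permutation matrix $e$, which uses $u(n)=4n^2$ Boolean variables. The permutation conditions are enforced by a $p$-computable polynomial that vanishes unless $e$ is a permutation matrix. The factor $\chi_{(2^n)}(\pi)$ is evaluated by the Murnaghan--Nakayama rule. For a two-column shape, every intermediate skew shape has at most two columns. Hence the number of border strip tableaux can be counted by a polynomial-size dynamic program over the cycles of $\pi$, with states recording the two column heights. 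This count must be realized as a $p$-computable polynomial in the Boolean variables of $e$. I would get this from Valiant's criterion (a family whose coefficient function is $\#\mathrm{P}/\mathrm{poly}$-computable lies in $\mathrm{VNP}$), which is available since we work over characteristic zero with non-uniform constants. So $(\operatorname{Imm}_{(2^n)})\in\mathrm{VNP}$.

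\emph{Hardness.} I would reduce from the perfect-matching (hafnian-type) polynomial $\mathrm{PM}_n(X)=\sum_{M}\prod_{\{i,j\}\in M}x_{ij}$ of the complete graph $K_{2n}$, which is $\mathrm{VNP}$-complete. Take a symmetric $2n\times 2n$ matrix $A$ with $a_{ij}=a_{ji}=x_{ij}$ and zero diagonal. Expanding $\operatorname{Imm}_{(2^n)}(A)$ over cycle covers, the covers consisting only of $2$-cycles are exactly the perfect matchings $M$, each with weight $\prod_{M}x_{ij}^2$. Each such cover contributes the character value $\chi_{(2^n)}((2^n))$. By Murnaghan--Nakayama this value is a signed count of domino tableaux of the $2\times n$ rectangle, and I would show it equals $\pm$ (the number of such tableaux), which is nonzero.

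To isolate the $2$-cycle covers, I would introduce a parameter $t$ and set $a_{ij}=x_{ij}t^{w(i,j)}$. The weights $w$ are chosen so that every $2$-cycle gets total exponent exactly $W$, while every cycle of length at least $3$ gets exponent different from its length times $W/2$. One option is $w(i,j)=W/2+\varepsilon(i,j)$ with $\varepsilon$ antisymmetric and non-telescoping, for instance $\varepsilon(i,j)=\operatorname{sgn}(j-i)\,2^{\min(i,j)}$ scaled appropriately. Then $\operatorname{Imm}_{(2^n)}$ of this matrix is a polynomial in $t$ of $p$-bounded degree. Evaluating it at polynomially many constants $t$ (these are oracle calls) and interpolating extracts the coefficient of $t^{nW}$ that collects the pure $2$-cycle covers. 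This yields $\chi_{(2^n)}((2^n))\cdot\mathrm{PM}_n(x_{ij}^2)$.

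Recovering $\mathrm{PM}_n$ itself from its evaluation at squares is the remaining issue. I would avoid it by placing $x_{ij}$ on one direction of each edge and $1$ on the other, i.e.\ $a_{ij}=x_{ij}$ for $i<j$ and $a_{ji}=1$. Finally, dividing by the nonzero constant $\chi_{(2^n)}((2^n))$ gives $\mathrm{PM}_n$ with polynomially many oracle calls and arithmetic operations, which is a $c$-reduction.

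\emph{Main obstacle.} The hardest step is the weight design. The total exponent of a long cycle depends on the cycle's orientation and vertex set, and one must ensure that no combination of long cycles mixed with $2$-cycles lands on exactly the same exponent $nW$. An antisymmetric $\varepsilon$ cancels on $2$-cycles but may also sum to $0$ around some longer cycles. My first attempt would therefore drop antisymmetry and use a second marker variable $s$ for vertex-count versus edge-count per cycle. Concretely, I would put $s$ on the diagonal-free entries and take $t$-exponents given by distinct powers of two indexed by unordered pairs. A cover has only $2$-cycles iff each unordered pair used appears with both orientations, and this condition can be read off from a two-variable interpolation over a grid of $p$-bounded size. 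If that still fails, the fallback is Bürgisser's Littlewood--Richardson approach via block sums $A\oplus J_k$. Pieri's rule for the two-column shape, where only horizontal strips of size at most $2$ are removable, would then let one peel off rows and recover the lower immanants by triangular interpolation.
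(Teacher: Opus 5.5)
\paragraph{Verdict.} Your membership argument is acceptable, though heavier than needed: Valiant's criterion applies because the coefficient $\chi_{(2^n)}(\pi)$ is a polynomial-time computable signed count. The paper simply writes $\operatorname{Imm}_{(2^n)}=D_n-D_{n-1}$ and gives explicit Boolean sums (Lemma~\ref{lem2.3}). The genuine gap is in the hardness part: isolating the pure $2$-cycle covers fails with polynomially many oracle calls.

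\paragraph{The weightings.} In your first weighting, $\varepsilon(i,j)=\operatorname{sgn}(j-i)2^{\min(i,j)}$ has size up to $2^{2n-1}$. So the polynomial in $t$ has exponential degree, contrary to your claim, and interpolation needs exponentially many evaluations. It also does not separate. For $1\to4\to2\to3\to1$ the $\varepsilon$-sum is $2-4+4-2=0$, so every cover formed by this $4$-cycle and a matching of the other vertices also lands on $t^{nW}$.

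Your second weighting uses distinct powers of two over $\binom{2n}{2}$ pairs, so exponents reach $2^{\Theta(n^2)}$ and no $p$-bounded grid suffices. The marker $s$ on off-diagonal entries multiplies every nonzero term by $s^{2n}$, since vertex count and edge count of a cycle always agree, so it carries no information.

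\paragraph{Why this cannot be tuned.} Suppose all perfect matchings get the same exponent. Comparing matchings that differ on four vertices forces $w(i,j)+w(j,i)=f(i)+f(j)$ for some $f$. Hence a cover hits the target iff $\sum\varepsilon=0$ over its edges, where $\varepsilon(i,j)=w(i,j)-\tfrac12(f(i)+f(j))$ is antisymmetric. Now take $k=\lfloor 2n/3\rfloor$ vertex-disjoint triangles with $\varepsilon$-sums $\pm\sigma_i$. Any even number of them, in either orientation, can be completed by a matching. Avoiding the target therefore forces the $2^{k-1}$ even-size subset sums of $|\sigma_1|,\dots,|\sigma_k|$ to be pairwise distinct. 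This gives $\max|w|\ge 2^{\Omega(n)}$, and the same holds for any fixed number of marker variables of polynomial degree.

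\paragraph{The fallback.} It does not close the gap either. By Lemma~\ref{lem2.4} and Pieri's rule, $\operatorname{Imm}_{(2^n)}(A\oplus J_k)$ equals $\operatorname{Imm}_{(2^{n-1},1)}(A)$, $2\operatorname{Imm}_{(2^{n-1})}(A)$, or $0$ for $k=1$, $k=2$, $k\ge3$. More generally, any block sum $A\oplus B$ yields only $\operatorname{Imm}_\mu(A)$ with $\mu\subseteq(2^n)$, so the permanent of $A$ never appears. This is exactly why B\"urgisser's method stops at width $2$.

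\paragraph{The missing bridge.} You need a route from two-column immanants to a VNP-hard polynomial. In the paper it has three parts. First, $\operatorname{Imm}_{(2^j,1^{m-2j})}=D_j-D_{j-1}$, by the argument of Lemma~\ref{lem2.2}. Second, $\sum_kD_k(X)=\Phi_X(1)=(-1)^m\operatorname{Per}_{-2,m}(X)$, since each $\pi$ is counted once per each of its $2^{c(\pi)}$ invariant subsets (Lemma~\ref{lem2.7}). Third, a separate gadget reduction $\operatorname{per}\le_p\operatorname{Per}_{-2}$ (Lemma~\ref{lem2.6}). Your plan contains neither of the last two ingredients.

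\paragraph{How the paper gets the statement.} The paper does not reprove the quoted statement; it follows from Theorem~\ref{thm1}, since a $p$-projection is a special $c$-reduction. There $\Phi_X(1)$ is extracted by one constant block $\mathcal B_m$ (Lemmas~\ref{lem2.8} and~\ref{lem2.10}). Under $c$-reductions you could instead query $\operatorname{Imm}_{(2^{N-i})}(X\oplus I_{r-2i})$ for $0\le i\le r/2$, where $I_s=J_1\oplus\cdots\oplus J_1$. This works because $\Phi_{I_s}=(1+z)^s$ and the palindromic $K_m$ is a linear combination of the $z^i(1+z)^{r-2i}$. That is the form in which padding by $J_1$ blocks actually becomes useful.
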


In this paper, we give a solution to Problem \ref{prob1}. Our main result is as follows.

\begin{theorem}\label{thm1}
Let $\mathbb{F}$ be a field of characteristic zero, and let $n$ be a positive integer. Define
\[
  C_n(X)=\operatorname{Imm}_{(2^n)}(X),
  \qquad X\in\mathbb{F}^{2n\times 2n}.
\]
Then the polynomial family $(C_n)_{n\geq 1}$ is $\mathrm{VNP}$-complete under $p$-projections.

Furthermore, there exists $N(n)=O(n^2)$ and a $p$-projection $\Pi_n$ such that for any $A\in\mathbb{F}^{n\times n}$,
\begin{eqnarray*}
  \operatorname{Imm}_{(2^{N(n)})}(\Pi_n(A))=\operatorname{per}(A).
\end{eqnarray*}
\end{theorem}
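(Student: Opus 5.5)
The proof has two parts: membership of $(C_n)$ in $\mathrm{VNP}$, and a hardness reduction from the permanent by an explicit $p$-projection $\Pi_n$. For membership I would invoke Bürgisser's result \cite{Bur2} that every immanant family is $p$-definable: $\chi_{(2^n)}(\pi)$ depends only on the cycle type of $\pi$, and the Murnaghan--Nakayama rule computes it in polynomial time for shapes of constant width. The sum over $\pi\in S_{2n}$ can then be written as a sum over Boolean encodings of permutation matrices. Hence $(C_n)\in\mathrm{VNP}$. Since $(\operatorname{per})$ is $\mathrm{VNP}$-complete under $p$-projections \cite{val} and $\leq_p$ is transitive, completeness then follows from the second statement, $\operatorname{Imm}_{(2^{N(n)})}(\Pi_n(A))=\operatorname{per}(A)$.

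For the reduction, note first that a block-diagonal substitution alone cannot work. The Littlewood--Richardson expansion
\[
\operatorname{Imm}_\lambda(Y\oplus Z)=\sum_{\mu,\nu}c^{\lambda}_{\mu\nu}\operatorname{Imm}_\mu(Y)\operatorname{Imm}_\nu(Z)
\]
only produces shapes $\mu\subseteq(2^N)$, and the permanent shape $(n)$ has width $n>2$. I would therefore build $\Pi_n(A)$ as a genuinely interacting gadget matrix of size $2N$ with $N=O(n^2)$. Its nonzero entries are the $a_{ij}$ together with constants $0$ and $1$, placed so that the monomials $\prod_i x_{i\pi(i)}$ surviving in $\operatorname{Imm}_{(2^N)}$ are controlled.

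Concretely, I would replace every vertex $i$ of the complete digraph underlying $A$ by an $O(n)$-size gadget, and every arc $(i,j)$ by a path whose single weighted edge carries $a_{ij}$. The aim is that every permutation contributing a monomial of $\operatorname{per}(A)$ corresponds to a fixed cycle cover $\sigma$ of $A$, extended inside the gadgets by forced fixed points and $2$-cycles. The cycle structure of the whole permutation should depend only weakly on $\sigma$: ideally all cycles of the extended permutation are coverable by dominoes in the Murnaghan--Nakayama recursion. Then $\chi_{(2^N)}$ equals the same nonzero constant $c_n$, uniformly in $\sigma$, and any unwanted permutations either hit a zero entry or cancel in pairs by sign.

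For that cancellation I would exploit that the Murnaghan--Nakayama rule for a two-column rectangle is very rigid. Removing a border strip of length $\ell$ from $(2^k)$ is possible only as a vertical strip, of height $\ell$ in the second column, or as a strip of the form $(2^{a},1^{b})$. So $\chi_{(2^N)}$ on cycle types $(\ell_1,\dots,\ell_r,1^{s})$ has an explicit closed form, which I would derive as a lemma first. With that lemma in hand, I would tune the gadget lengths, for instance by padding each original cycle of length $\ell$ to length $2\ell$ or to even length by inserting $1$-weighted arcs, so that the character value becomes independent of the cycle lengths $\ell_t$ of $\sigma$. Having obtained $\operatorname{Imm}_{(2^N)}(\Pi_n(A))=c_n\operatorname{per}(A)$ with $c_n\neq0$, I would divide one weighted entry row by $c_n$. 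This is allowed because the field has characteristic zero and $p$-projections may use arbitrary constants depending on $n$, and it yields exactly $\operatorname{per}(A)$.

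The main obstacle is the design of this gadget so that the character value is genuinely constant over all cycle covers while spurious permutations vanish. My first attempt would be to make every cycle of $\sigma$ appear as an even cycle of the big permutation, paired with a matching structure of $2$-cycles. I would then verify via the character lemma that the contributions are uniform, falling back to an inclusion of auxiliary $J_2$-type blocks and a Pieri (horizontal-strip) argument if that fails. The same construction, transposed via $\chi_{\lambda^\top}=\operatorname{sgn}\cdot\chi_\lambda$ and with the gadget signs adjusted, should handle the shape $(n,n)$.
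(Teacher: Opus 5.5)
Your membership argument is acceptable. The paper instead writes $\operatorname{Imm}_{(2^n)}=D_n-D_{n-1}$ as an explicit Boolean sum, but Valiant's criterion with an efficient Murnaghan--Nakayama computation also works. The hardness half has a genuine gap: you never construct $\Pi_n$, and the mechanism you sketch cannot work as stated.

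The character satisfies $\chi_{(2^N)}(\pi)=\operatorname{sgn}(\pi)\bigl(a_N(\pi)-a_{N-1}(\pi)\bigr)$, where $\sum_k a_k(\pi)z^k=\prod_{c}(1+z^{|c|})$ over the cycles $c$ of $\pi$. So it sees the whole multiset of cycle lengths, and padding the cycles of $\sigma$ does not wash this out. Suppose each $\ell$-cycle of $\sigma$ becomes a $2\ell$-cycle and the rest of $\pi$ is a fixed set of cycles. Uniformity would then require a linear functional $\Lambda$ with $\Lambda\bigl(\prod_t(1+w^{\ell_t})\bigr)=\kappa(-1)^{c(\sigma)}$ for every cycle type $(\ell_t)$ of $\sigma$. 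For $n=4$ the five products are palindromic quartics spanning only a $3$-dimensional space, and the resulting linear system forces $\kappa=0$. Concretely, already for $n=2$ with six extra fixed points, $\chi_{(2^5)}$ equals $6$ on cycle type $(2,2,1^6)$ but $4$ on $(4,1^6)$.

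If instead you make the cycle type of $\pi$ independent of $\sigma$, you must cancel spurious cycle covers. Here $\chi_{(2^N)}(\pi)$ is a linear functional of $\prod_c(1+z^{|c|})$, not a product of per-cycle factors like $\alpha^{c(\pi)}$. So the usual local factorization of the sum over covers is unavailable, and you offer no substitute.

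The missing idea is to route the reduction through the $(-2)$-permanent. Your dismissal of block-diagonal substitutions is premature: the Littlewood--Richardson obstruction applies only when the variable block is $A$ itself. In fact the paper's $\Pi_n(A)$ is $\widehat M_n(A)\oplus\mathcal B_{m(n)}$. The paper writes $\operatorname{Imm}_{(2^N)}(Y)=D_N(Y)-D_{N-1}(Y)$, the difference of the two middle coefficients of $\Phi_Y(z)=\sum_{S}\det Y[S,S]\det Y[S^{\mathrm c},S^{\mathrm c}]\,z^{|S|}$. This polynomial is multiplicative under $\oplus$ and satisfies $\Phi_X(1)=(-1)^m\operatorname{Per}_{-2,m}(X)$.

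The constant block $\mathcal B_m$ is built from signed monomial matrices with cycles of lengths $1,2,4,\ldots,2^{q-1}$. It has $\Phi_{\mathcal B_m}=(-1)^mK_m$, essentially $(1+z+\cdots+z^L)^3$, whose coefficients have constant second difference $-2$ near the centre. This turns $[z^N]-[z^{N-1}]$ applied to $\Phi_X K_m$ into evaluation at $z=1$, so $\operatorname{Imm}_{(2^N)}(X\oplus\mathcal B_m)=\operatorname{Per}_{-2,m}(X)$ for every $X$. This is consistent with your obstruction, because $\operatorname{sgn}(\pi)2^{c(\pi)}=\operatorname{sgn}(\pi)\sum_k a_k(\pi)$ is itself a combination of characters of width at most $2$.

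An interacting gadget is then needed only for $\operatorname{per}\le_p\operatorname{Per}_{-2}$. There the weight $(-2)^{c(\pi)}$ is multiplicative over cycles, so unwanted covers can be killed locally, as the six-vertex gadget with matrix $W$ does. Without an ingredient of this kind, your proposal does not establish $\operatorname{Imm}_{(2^{N(n)})}(\Pi_n(A))=\operatorname{per}(A)$, and hence does not establish hardness.
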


We also prove that the immanant family corresponding to rectangular Young diagrams of length $2$ is $\mathrm{VNP}$-complete under $p$-projections.

\begin{theorem}\label{thm2}
Let $\mathbb{F}$ be a field of characteristic zero, and let $n$ be a positive integer. Define
\[
  R_n(X)=\operatorname{Imm}_{(n,n)}(X),
  \qquad X\in\mathbb{F}^{2n\times 2n}.
\]
Then the polynomial family $(R_n)_{n\geq 1}$ is $\mathrm{VNP}$-complete under $p$-projections.

Moreover, for each $n$, there exists a matrix $U_n\in\mathbb{F}^{n\times n}$ such that for any $A\in\mathbb{F}^{n\times n}$,
\begin{eqnarray*}
  \operatorname{Imm}_{(n,n)}(A\oplus U_n)=\operatorname{per}(A).
\end{eqnarray*}
\end{theorem}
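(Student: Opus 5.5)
The plan is to prove the displayed identity directly with an explicit constant matrix, and then deduce VNP-completeness from Valiant's theorem. I will take
\[
U_n=\tfrac{1}{n!}J_n,
\]
where $J_n$ is the $n\times n$ all-ones matrix; this is legitimate because $\mathrm{char}\,\mathbb{F}=0$. The identity will follow from Frobenius reciprocity together with Pieri's rule, which is where the horizontal strips defined in the introduction enter.

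\textbf{Step 1 (block reduction).} For $B=A\oplus U_n$, the product $\prod_i b_{i\pi(i)}$ vanishes unless $\pi$ maps $[n]$ to itself and $\{n+1,\dots,2n\}$ to itself. So only $\pi=(\sigma,\tau)\in S_n\times S_n$ contribute, and
\[
\operatorname{Imm}_{(n,n)}(A\oplus U_n)=\sum_{\sigma\in S_n}\Big(\prod_{i=1}^n a_{i\sigma(i)}\Big)\cdot\frac{1}{n!}\sum_{\tau\in S_n}\chi_{(n,n)}(\sigma,\tau),
\]
because every entry of $U_n$ equals $1/n!$.

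\textbf{Step 2 (averaging over $\tau$).} For fixed $\sigma$, the average $\frac1{n!}\sum_\tau \chi_{(n,n)}(\sigma,\tau)$ is the character of the subspace of $\mathrm{Res}^{S_{2n}}_{S_n\times S_n}V_{(n,n)}$ fixed by the second factor. Decompose the restriction by the Littlewood--Richardson rule as $\bigoplus_{\mu,\nu\vdash n} c^{(n,n)}_{\mu\nu}V_\mu\otimes V_\nu$. The average then equals
\[
\sum_{\mu\vdash n}c^{(n,n)}_{\mu,(n)}\,\chi_\mu(\sigma).
\]
By Pieri's rule, $c^{(n,n)}_{\mu,(n)}=1$ exactly when $\mu\subseteq(n,n)$ and $(n,n)/\mu$ is a horizontal strip of size $n$; otherwise it is $0$.

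\textbf{Step 3 (only $\mu=(n)$ survives).} A horizontal strip has at most one box per column. Since $(n,n)$ has $n$ columns of height $2$, each column must already contain a box of $\mu$. Hence $\mu_1\ge n$, and with $|\mu|=n$ this forces $\mu=(n)$. Substituting back, the average is the trivial character, so
\[
\operatorname{Imm}_{(n,n)}(A\oplus U_n)=\sum_\sigma\prod_i a_{i\sigma(i)}=\operatorname{per}(A).
\]
This is the "Moreover" statement. The only delicate points are invoking the branching rule correctly and checking the Pieri condition; everything else is bookkeeping.

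\textbf{Step 4 (completeness).} Membership $(R_n)\in\mathrm{VNP}$ follows from Bürgisser's result that immanant families are $p$-definable, since the character values $\chi_\lambda(\pi)$ are computable. The map $A\mapsto A\oplus U_n$ substitutes each variable of $R_n$ by a variable $a_{ij}$ or a constant in $\{0,1/n!\}$, with $t(n)=n$. Hence $(\operatorname{per}_n)\le_p(R_n)$. Valiant's theorem states that the permanent is $\mathrm{VNP}$-complete under $p$-projections, and $p$-projections compose with $p$-bounded blow-up, so $(R_n)$ is $\mathrm{VNP}$-complete under $p$-projections.
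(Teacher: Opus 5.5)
Your choice of $U_n$ is wrong, and this breaks the identity. In Step~1 you write a single factor $\frac{1}{n!}$ ``because every entry of $U_n$ equals $1/n!$''. But each permutation monomial $\prod_{j=1}^n (U_n)_{j\tau(j)}$ is a product of $n$ such entries, so it equals $(n!)^{-n}$, not $(n!)^{-1}$. If you carry your Steps~2--3 through with the correct factor, you get
\[
\operatorname{Imm}_{(n,n)}\bigl(A\oplus \tfrac{1}{n!}J_n\bigr)=(n!)^{-n}\cdot n!\,\operatorname{per}(A)=(n!)^{1-n}\operatorname{per}(A).
\]
This differs from $\operatorname{per}(A)$ for every $n\geq 2$. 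For $n=2$ you can check it against the character table of $S_4$: the left side is $\operatorname{per}(A)/2$.

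A $p$-projection requires exact equality, not equality up to a scalar. So both the ``Moreover'' identity and the reduction $(\operatorname{per}_n)\leq_p(R_n)$ fail as written. Rescaling all of $J_n$ by $(n!)^{-1/n}$ is not an option in general, since that root need not exist, for instance over $\mathbb{Q}$.

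The fix is the paper's choice: scale only one row. Take $U_n$ with first row $(1/n!,\dots,1/n!)$ and all other entries $1$. Then every permutation monomial of $U_n$ is exactly $1/n!$. With this $U_n$ your Steps~2--3 go through verbatim and match the paper's argument. Your averaging over $\tau$ is the paper's identity $\operatorname{Imm}_\nu(U_n)=\langle\chi_\nu,\mathbf{1}\rangle_{S_n}$ combined with the Littlewood--Richardson restriction (Lemma~\ref{lem2.4}). Your Pieri argument forcing $\mu=(n)$ is the one in Lemma~\ref{lem2.5}.

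A smaller point on Step~4: VNP membership does not follow just from the character values being ``computable''. Valiant's criterion needs the coefficient map $\pi\mapsto\chi_{(n,n)}(\pi)$ to be a difference of $\#P/\mathrm{poly}$ functions. That does hold, for example via the Murnaghan--Nakayama rule, but you should say so.

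The paper avoids the citation altogether. It uses $\chi_{(n,n)}=a^{(2n)}_n-a^{(2n)}_{n-1}$ to write $\operatorname{Imm}_{(n,n)}=P_n-P_{n-1}$ (Lemma~\ref{lem2.2}). It then gives explicit Boolean-sum formulas for $P_k$ (Lemma~\ref{lem2.3}). You could use the same identity to make your membership claim self-contained.
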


The paper is organized as follows. In Section 2, we give some definitions and lemmas. In Section 3, we prove Theorems \ref{thm1} and \ref{thm2}. Section 4 gives a brief  summary.

\section{Basic preliminaries}

In this section, we give some definitions and lemmas. For undefined notation and terminology, see \cite{jam} and \cite{sag}.

If a finite group $G$ acts on a finite set $\Omega$, i.e., for each $g\in G$ and $\omega\in\Omega$ there is specified an element $g\omega\in\Omega$ such that $e\omega=\omega$ and $(gh)\omega=g(h\omega)$, then on the vector space with basis $\{e_\omega:\omega\in\Omega\}$ we define
\[
  \rho_\Omega(g)e_\omega=e_{g\omega}.
\]
The representation thus obtained is called the {\em permutation representation} of $G$ on $\Omega$. For any $g\in G$, the character value of this permutation representation equals the number of elements fixed by $g$, i.e.,
\[
  \chi_{\rho_\Omega}(g)
  =\#\{\omega\in\Omega:g\omega=\omega\}.
\]

Let $H(z)=\sum_{j=0}^{d}h_jz^j\in \mathbb{F}[z]$, with $h_d\ne0$. If $h_j=h_{d-j}$ for every $0\leq j\leq d$, then we say that the coefficients of $H$ are symmetric with respect to degree $d$, and call $H$ a {\em palindromic polynomial}. That is, the coefficients of $z^j$ and $z^{d-j}$ are always equal. This condition is equivalent to $H(z)=z^dH(z^{-1})$. Now let $\alpha\in\mathbb{F}$, and define the $\alpha$-permanent of an $m\times m$ matrix $X=[x_{ij}]$ as
\begin{eqnarray*}
  \operatorname{Per}_{\alpha,m}(X)
  :=\sum_{\pi\in S_m}\alpha^{c(\pi)}
  \prod_{i=1}^{m}x_{i\pi(i)},
\end{eqnarray*}
where $c(\pi)$ denotes the number of cycles in the disjoint cycle decomposition of $\pi$. Fixed points of length $1$ are also counted as cycles. When the size of the matrix is clear from the context, we abbreviate $\operatorname{Per}_{\alpha,m}(X)$ to $\operatorname{Per}_\alpha(X)$.
By the usual convention, the second–order fermionant and the $(-2)$-permanent satisfy
\begin{eqnarray*}
  \operatorname{Ferm}_{2,m}(X)=(-1)^m\operatorname{Per}_{-2,m}(X).
\end{eqnarray*}
For any $0\leq k\leq m$, define
\begin{eqnarray*}
  a_k^{(m)}(\pi)
  :=\#\{S\subseteq[m]: |S|=k,\ \pi(S)=S\}.
\end{eqnarray*}
Here, $\#$ and $|\cdot|$ both denote cardinality of finite sets, and $\pi(S)=\{\pi(i):i\in S\}$. Thus $a_k^{(m)}(\pi)$ equals the number of $k$-element subsets of $[m]$ that are invariant under $\pi$. From the representation–theoretic viewpoint, it is also the character value of the permutation representation of $S_m$ on the set of all $k$-element subsets of $[m]$ arising from the natural action.

If $H$ is a subgroup of a finite group $G$, and $V$ is a representation of $G$, then $\operatorname{Res}_H^G V$ denotes the {\em restricted representation} obtained by restricting the action of $V$ to $H$. If $W$ is a representation of $H$, then $\operatorname{Ind}_H^G W$ denotes the {\em induced representation} from $W$ to $G$. The notations $\operatorname{Res}$ and $\operatorname{Ind}$ are also used for the corresponding characters. We write $\mathbf{1}_H$ for the one–dimensional trivial representation or trivial character of $H$. When no confusion arises, we simply write $\mathbf{1}$. Moreover, $\cong$ denotes isomorphism of representations, and $\bigoplus$ denotes direct sum of representations.
For a partition $\lambda\vdash m$, let $S^\lambda$ be the complex Specht module of $S_m$. It is an irreducible representation, and its character is denoted by $\chi_\lambda$. When embedding $S_r\times S_s$ into $S_{r+s}$, we agree that the first factor acts on $\{1,\ldots,r\}$ and the second on $\{r+1,\ldots,r+s\}$; this subgroup is called a Young subgroup. If $U$ and $V$ are representations of $S_r$ and $S_s$, respectively, then $U\boxtimes V$ denotes their outer tensor product on $S_r\times S_s$. Its character is denoted by $\chi_U\otimes\chi_V$ and satisfies
\[
  (\chi_U\otimes\chi_V)(\sigma,\tau)
  =\chi_U(\sigma)\chi_V(\tau).
\]
Let $s_\lambda$ be the Schur symmetric function corresponding to the partition $\lambda$, and write $h_t=s_{(t)}$ for the $t$-th complete homogeneous symmetric function. The Littlewood–Richardson coefficients $c_{\mu,\nu}^{\lambda}$ are the structure constants in the product of Schur functions:
\[
  s_\mu s_\nu=\sum_\lambda c_{\mu,\nu}^{\lambda}s_\lambda.
\]
We denote the Frobenius character map by $\operatorname{ch}$. This map sends the character of $S^\lambda$ to $s_\lambda$, direct sums of representations to sums of symmetric functions, and the outer tensor product induced from a Young subgroup to the product of the corresponding symmetric functions. Pieri's rule is a special case of the Littlewood–Richardson rule: for any partition $\mu$ and positive integer $k$,
\[
  s_\mu h_k=\sum_\lambda s_\lambda,
\]
where the sum is over all partitions $\lambda$ such that $\lambda/\mu$ is a horizontal strip of size $k$, and each admissible $s_\lambda$ has coefficient $1$.
For any finite set $T$, denote by $\mathfrak S(T)$ the permutation group of all bijections on $T$. For an $m\times m$ matrix $X=[x_{ij}]$ and a subset $S\subseteq[m]$, write $X[S,S]$ for the {\em principal submatrix} obtained by taking the rows and columns whose indices belong to $S$, and write $S^{\mathrm{c}}=[m]\setminus S$ for the complement of $S$ in $[m]$. For a proposition $E$, its indicator function is denoted by $\mathbf{1}_E$: it equals $1$ if $E$ is true, and $0$ otherwise. For any $0\leq k\leq m$, define
\begin{eqnarray*}
  P_k(X)
  &:=&\sum_{\substack{S\subseteq[m]\\|S|=k}}
      \operatorname{per} X[S,S]~\operatorname{per} X[S^{\mathrm{c}},S^{\mathrm{c}}],\\
  D_k(X)
  &:=&\sum_{\substack{S\subseteq[m]\\|S|=k}}
      \operatorname{det} X[S,S]~\operatorname{det} X[S^{\mathrm{c}},S^{\mathrm{c}}],
\end{eqnarray*}
where $P_k(X)$ and $D_k(X)$ are called the permanent principal minor convolution and the determinant principal minor convolution, respectively. When $S=\varnothing$, $X[S,S]$ is the $0\times0$ empty matrix. By convention, the permanent and determinant of the empty matrix are both $1$. The {\em Leibniz expansion} is the weighted sum of permutation monomials by the sign of the permutation.

A {\em weighted directed graph} is a finite directed graph in which each directed edge is assigned a weight in a commutative $\mathbb{F}$-algebra. Edges not present are regarded as edges of weight $0$.
We use a fixed six–vertex directed subgraph to reduce the permanent to the $(-2)$-permanent. After ordering the vertices of the graph, its adjacency matrix is denoted by $M=[m_{uv}]$, where $m_{uv}$ is the weight of the directed edge $u\to v$. A {\em directed cycle cover} is a union of pairwise vertex–disjoint directed cycles such that every vertex belongs to exactly one cycle. Equivalently, in the chosen edges, each vertex has exactly one incoming and one outgoing edge. The weight of a cycle cover is the product of the weights of all its edges. If this product is nonzero, the cover is called a nonzero cycle cover.

Let
\begin{eqnarray*}
  W=
  \begin{pmatrix}
    -1&\frac12&1\\
    -1&\frac12&1\\
    1&-\frac12&-\frac12
  \end{pmatrix}.
\end{eqnarray*}
Given an $n\times n$ generic variable matrix $A=(a_{ij})$, we construct the following weighted directed graph over the polynomial ring $\mathbb{F}[a_{ij}:1\leq i,j\leq n]$.
First take vertices
\[
  L_1,\ldots,L_n,\qquad R_1,\ldots,R_n.
\]
For each ordered pair $e=(i,j)\in[n]^2$, add six vertices
$u_{e,1},u_{e,2},u_{e,3},v_{e,1},v_{e,2},v_{e,3}$, and set directed edges and their weights as follows:
\begin{eqnarray*}
  u_{e,r}&\longrightarrow&v_{e,r}\qquad \text{weight }1,\quad r=1,2,3,\\
  v_{e,r}&\longrightarrow&u_{e,s}\qquad \text{weight }W_{rs},\quad r,s=1,2,3,\\
  L_i&\longrightarrow&u_{e,1}\qquad \text{weight }a_{ij},\\
  v_{e,1}&\longrightarrow&R_j\qquad \text{weight }1,\\
  R_j&\longrightarrow&u_{e,2}\qquad \text{weight }1,\\
  v_{e,2}&\longrightarrow&L_i\qquad \text{weight }1.
\end{eqnarray*}
All other edges have weight zero. For each $e=(i,j)$, the six vertices
$u_{e,1},u_{e,2},u_{e,3},v_{e,1},v_{e,2},v_{e,3}$ together with the edges among them are called the six–vertex subgraph corresponding to $e$. The four edges connecting $L_i$, $R_j$ to this six–vertex subgraph are called external edges. Among them, the two edges entering the six–vertex subgraph are called external incoming edges, and the two edges leaving it are called external outgoing edges. Let the adjacency matrix of the whole directed graph be $M_n(A)$; its size is
$m(n)=2n+6n^2$. By construction, every nonzero cycle cover must contain all edges
$u_{e,r}\longrightarrow v_{e,r}$. Now fix the six–vertex subgraph corresponding to $e=(i,j)$. For a given cycle cover, let
$I\subseteq\{1,2\}$ be the set of labels of the selected external incoming edges, and $O\subseteq\{1,2\}$ the set of labels of the selected external outgoing edges. Specifically, if the external incoming edge $L_i\to u_{e,1}$ is chosen, we put label $1$ into $I$;
if $R_j\to u_{e,2}$ is chosen, we put label $2$ into $I$. Similarly, if the external outgoing edge $v_{e,1}\to R_j$ is chosen, put $1$ into $O$; if $v_{e,2}\to L_i$ is chosen, put $2$ into $O$.
If $s\in I$, then $u_{e,s}$ already has an incoming edge from outside the six–vertex subgraph.
If $r\in O$, then the outgoing edge of $v_{e,r}$ already points outside. Therefore, the number of $u$ vertices that still need an incoming edge from inside the subgraph is $3-|I|$, and the number of $v$ vertices that still need an outgoing edge inside the subgraph is $3-|O|$. Only when $|I|=|O|$ can these $v$ vertices be paired with the $u$ vertices using internal edges.
For an $m\times m$ matrix $X$ and an indeterminate $z$, define the following principal minor generating polynomial:
\begin{eqnarray*}
  \Phi_X(z)
  :=\sum_{k=0}^{m}D_k(X)z^k
  =\sum_{S\subseteq[m]}
    \operatorname{det} X[S,S]\operatorname{det} X[S^{\mathrm{c}},S^{\mathrm{c}}]z^{|S|}.
\end{eqnarray*}
In what follows, equalities involving $z^{-1}$ are understood in the Laurent polynomial ring $\mathbb{F}[z,z^{-1}]$. Laurent polynomials are allowed to have finitely many negative powers. If $F(z)=\sum_jf_jz^j$, write $[z^j]F(z)=f_j$, i.e., $[z^j]F(z)$ denotes the coefficient of $z^j$ in $F$. Thus $[z^a]-[z^b]$ denotes the linear operator ``extract the coefficient of $z^a$ and subtract the coefficient of $z^b$''. For a nonzero polynomial $F$, write $\deg F$ for the degree of its highest nonzero term.

A square matrix is called a {\em monomial matrix} if each row and each column contains exactly one nonzero entry. For a monomial matrix $P=[p_{ij}]$, the column containing the unique nonzero entry in row $i$ is denoted by $\rho(i)$. Since each column also has exactly one nonzero entry, the map $i\mapsto\rho(i)$ is a permutation, uniquely determined by $P$; we call $\rho$ the {\em associated permutation} of the monomial matrix $P$. For a positive integer $q$, set $L=2^q-1$. Partition the set $[L]$ into consecutive intervals of lengths
$1,2,4,\ldots,2^{q-1}$, and on each interval take the cyclic permutation consisting of all elements of that interval. Since
\[
  1+2+4+\cdots+2^{q-1}=2^q-1=L,
\]
these disjoint cycles together determine a permutation $\rho_L$ on $[L]$. For each $\varepsilon\in\{1,-1\}$, define the monomial matrix $P_L^{[\varepsilon]}$ as follows: its associated permutation is $\rho_L$, and
\[
  (P_L^{[\varepsilon]})_{i,\rho_L(i)}=\omega_i,
  \qquad
  \omega_1=\varepsilon\,\operatorname{sgn}(\rho_L),\quad
  \omega_i=1\ (2\leq i\leq L),
\]
and all other entries are zero. For a positive integer $m$, fix
\[
  q=q(m):=\min\{q\geq1:2^q-1\geq2m+7\},
  \qquad L=L(m):=2^{q(m)}-1,
\]
and write
\[
  S_L(z)=1+z+\cdots+z^L.
\]
Define
\begin{eqnarray}\label{equ9}
  K_m(z)=
  \begin{cases}
    S_L(z)^3,&m~\text{is odd},\\[1mm]
    \dfrac{1+z}{2}S_L(z)^3,&m~\text{is even}.
  \end{cases}
\end{eqnarray}
Let $r=r(m):=\deg K_m$, and set $N=N(m):=(m+r(m))/2$. A matrix whose entries are constants (i.e., elements of $\mathbb{F}$ containing no variables) is called a constant matrix. For the above positive integer $m$ and the uniquely determined $q(m)$ and $L(m)$, define the constant matrix
\begin{eqnarray*}
  \mathcal B_m=
  \begin{cases}
    P_L^{[-1]}\oplus P_L^{[+1]}\oplus P_L^{[+1]},
      &m\text{ odd},\\[1mm]
    P_L^{[+1]}\oplus P_L^{[+1]}\oplus P_L^{[+1]}\oplus[\frac12],
      &m\text{ even}.
  \end{cases}
\end{eqnarray*}
where $[1/2]$ denotes the $1\times1$ matrix whose sole entry is $1/2$, and $\dim\mathcal B_m$ denotes the size of the square matrix $\mathcal B_m$. From the sizes of the blocks, we have $\dim\mathcal B_m=r(m)=\deg K_m$. For a positive integer $n$, define the constant matrix:
\begin{eqnarray*}
  U_n=
  \begin{pmatrix}
    \dfrac1{n!}&\dfrac1{n!}&\cdots&\dfrac1{n!}\\
    1&1&\cdots&1\\
    \vdots&\vdots&&\vdots\\
    1&1&\cdots&1
  \end{pmatrix}\in\mathbb{F}^{n\times n}.
\end{eqnarray*}
Since $\mathbb{F}$ has characteristic zero, $n!$ is nonzero and invertible in $\mathbb{F}$, so this constant matrix is well–defined. For two complex–valued class functions $\varphi$ and $\psi$ on $S_n$, their standard inner product is
\[
  \langle\varphi,\psi\rangle_{S_n}
  :=\frac1{n!}\sum_{\tau\in S_n}
  \varphi(\tau)\overline{\psi(\tau)}.
\]
Here $\overline{\psi(\tau)}$ denotes the complex conjugate of $\psi(\tau)$. Although the base field throughout is an arbitrary field $\mathbb{F}$ of characteristic zero, the irreducible characters of the symmetric group are integer–valued. Hence the orthogonality relations below may be used first over $\mathbb{C}$; the resulting values $0$ or $1$ are then interpreted as identities in $\mathbb{F}$ via the natural embedding $\mathbb{Q}\hookrightarrow\mathbb{F}$. Every permutation monomial of $U_n$ takes $1/n!$ from the first row and $1$ from each of the remaining rows, so each permutation monomial has weight $1/n!$. Therefore, for any partition $\nu\vdash n$, we have
\begin{equation}\label{equ5}
  \operatorname{Imm}_\nu(U_n)
  =\frac1{n!}\sum_{\tau\in S_n}\chi_{\nu}(\tau)
  =\langle\chi_{\nu},\mathbf{1}\rangle_{S_n}
  =\begin{cases}
    1,&\nu=(n),\\
    0,&\nu\ne(n).
  \end{cases}
\end{equation}
The first equality follows from the definition of immanant and the fact that all permutation monomials have the same weight. The second equality comes from the definition of the inner product of class functions. The last equality uses the orthogonality of irreducible characters: the inner product of $\chi_\nu$ with the trivial character $\mathbf{1}$ is $1$ only when $\chi_\nu=\mathbf{1}$, i.e., $\nu=(n)$, and is $0$ otherwise.
Let $\operatorname{per}_n$ denote the permanent polynomial corresponding to an $n\times n$ generic variable matrix.

\begin{lemma}(James and Kerber, \cite{jam})\label{lem2.01}
Let $n$ be a positive integer and $\lambda$ a partition of $n$. Then for every $\pi\in S_n$,
\begin{eqnarray*}
\chi_{\lambda^\top}(\pi) = \operatorname{sgn}(\pi) \, \chi_{\lambda}(\pi),
\end{eqnarray*}
where $\operatorname{sgn}(\pi)$ denotes the sign of $\pi$: $+1$ for even permutations and $-1$ for odd permutations.
\end{lemma}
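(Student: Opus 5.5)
The identity $\chi_{\lambda^\top}=\operatorname{sgn}\cdot\chi_\lambda$ is classical, and I would derive it from the construction of Specht modules. The idea is to show that $S^{\lambda^\top}\cong S^\lambda\otimes S^{(1^n)}$, where $S^{(1^n)}$ is the sign representation. Taking traces then gives the claim immediately, since the character of an inner tensor product is the pointwise product of the characters.

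First I need the classification of irreducibles. I would recall that $S^\lambda$ is the unique irreducible constituent common to the permutation module $M^\lambda=\operatorname{Ind}_{S_\lambda}^{S_n}\mathbf 1$ and the signed induced module $\operatorname{Ind}_{S_{\lambda^\top}}^{S_n}\operatorname{sgn}$, and that it occurs with multiplicity one in each. This is the standard characterization via the Young symmetrizer $e_T=\sum_{\sigma\in C_T}\operatorname{sgn}(\sigma)\,\sigma\sum_{\tau\in R_T}\tau$. Here $R_T\cong S_\lambda$ is the row stabilizer of a tableau $T$ of shape $\lambda$, and $C_T\cong S_{\lambda^\top}$ is its column stabilizer.

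Next, tensoring with the sign representation is an exact autoequivalence, so it sends irreducibles to irreducibles. It also gives
\[
\operatorname{Ind}_H^{G}(W)\otimes\operatorname{sgn}\cong\operatorname{Ind}_H^G\bigl(W\otimes\operatorname{Res}_H\operatorname{sgn}\bigr).
\]
Applying this, $S^\lambda\otimes\operatorname{sgn}$ is the unique common irreducible constituent of $\operatorname{Ind}_{S_\lambda}\operatorname{sgn}$ and $\operatorname{Ind}_{S_{\lambda^\top}}\mathbf 1$. Now take the transposed tableau $T^\top$, which has shape $\lambda^\top$. Its row stabilizer is $C_T$ and its column stabilizer is $R_T$, so these two modules are exactly the pair that characterizes $S^{\lambda^\top}$. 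Hence $S^\lambda\otimes\operatorname{sgn}\cong S^{\lambda^\top}$.

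An alternative route avoids modules and uses the Frobenius map $\operatorname{ch}$ together with the involution $\omega$ on symmetric functions, defined by $\omega(p_\mu)=\varepsilon_\mu p_\mu$ with $\varepsilon_\mu=\operatorname{sgn}$ of cycle type $\mu$. The steps are:
\begin{enumerate}
\item Multiplying a character by $\operatorname{sgn}$ corresponds to applying $\omega$ to its Frobenius image.
\item By the Jacobi--Trudi and dual Jacobi--Trudi identities, $\omega(s_\lambda)=s_{\lambda^\top}$, using $\omega h_k=e_k$.
\end{enumerate}
Either route gives the lemma.

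The main obstacle is only the uniqueness and multiplicity-one statement in the first route, or $\omega(s_\lambda)=s_{\lambda^\top}$ in the second. Both are textbook facts (James–Kerber, Sagan), so I would cite them rather than reprove them.
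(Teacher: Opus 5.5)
The paper gives no proof of this lemma; it is quoted from James and Kerber, so there is no internal argument to compare against. Your argument is correct.

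Your first route is essentially the standard proof in the cited reference. You characterize $S^\lambda$ as the unique common irreducible constituent, with multiplicity one in each, of $\operatorname{Ind}_{S_\lambda}^{S_n}\mathbf{1}$ and $\operatorname{Ind}_{S_{\lambda^\top}}^{S_n}\operatorname{sgn}$. You then twist by the sign using $\operatorname{Ind}_H^G(W)\otimes\operatorname{sgn}\cong\operatorname{Ind}_H^G(W\otimes\operatorname{Res}_H\operatorname{sgn})$, and finish with $(\lambda^\top)^\top=\lambda$. The transposed-tableau remark is not needed; $(\lambda^\top)^\top=\lambda$ already identifies the two pairs of induced modules. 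The multiplicity-one characterization is exactly the fact that $\langle \operatorname{Ind}_{S_\lambda}\mathbf{1},\operatorname{Ind}_{S_{\lambda^\top}}\operatorname{sgn}\rangle=1$, and citing it is reasonable.

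Your second route via the involution $\omega$ is also correct. It fits this paper's toolkit better, since the paper already works with the Frobenius map $\operatorname{ch}$ and Pieri's rule in the proof of Lemma \ref{lem2.1}. In that language the lemma reduces to $\operatorname{ch}(\operatorname{sgn}\cdot\chi_\lambda)=\omega(s_\lambda)=s_{\lambda^\top}=\operatorname{ch}(\chi_{\lambda^\top})$ together with injectivity of $\operatorname{ch}$ on class functions. The only input beyond what the paper already assumes is the Jacobi--Trudi pair with $\omega h_k=e_k$.
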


\begin{lemma}\label{lem2.1}
Let $m$ be a positive integer and $k$ an integer with $0\leq k\leq m/2$. Then
\begin{eqnarray*}
  \operatorname{Ind}_{S_{m-k}\times S_k}^{S_m}\mathbf{1}
  \cong\bigoplus_{j=0}^{k}S^{(m-j,j)}.
\end{eqnarray*}
Moreover,
\begin{eqnarray*}
  a_k^{(m)}=\sum_{j=0}^{k}\chi_{(m-j,j)}.
\end{eqnarray*}
\end{lemma}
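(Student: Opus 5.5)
The plan is to prove the decomposition by Pieri's rule and the Frobenius characteristic map, and then read off the character identity from the permutation-representation description of $a_k^{(m)}$.

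\emph{Decomposition.} Under $\operatorname{ch}$, the trivial representation of $S_{m-k}$ goes to $h_{m-k}$ and that of $S_k$ to $h_k$. Since $\operatorname{ch}$ sends induction from the Young subgroup $S_{m-k}\times S_k$ to the product of symmetric functions, we get
\[
\operatorname{ch}\bigl(\operatorname{Ind}_{S_{m-k}\times S_k}^{S_m}\mathbf{1}\bigr)=h_{m-k}h_k .
\]
Apply Pieri's rule with $\mu=(m-k)$, a single row. Then $h_{m-k}h_k$ is the sum of $s_\lambda$ over all $\lambda$ such that $\lambda/(m-k)$ is a horizontal strip of size $k$.

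Such a $\lambda$ has at most two rows, because a horizontal strip adds at most one box per column and the one-row $\mu$ occupies each column once. Write $\lambda=(m-j,j)$.
\begin{itemize}
\item The added boxes in row $2$ lie in columns $1,\dots,j$. These must not sit under added boxes, so we need $j\le m-k$.
\item We need $\lambda_1\ge m-k$, that is, $j\le k$.
\item Since $k\le m/2$ gives $k\le m-k$, the binding condition is $0\le j\le k$, and each such $j$ does give a horizontal strip.
\item Each $s_{(m-j,j)}$ appears with coefficient $1$.
\end{itemize}
Because $\operatorname{ch}$ is injective on characters and $\operatorname{ch}(\chi_{(m-j,j)})=s_{(m-j,j)}$, this yields the isomorphism $\operatorname{Ind}\mathbf 1\cong\bigoplus_{j=0}^k S^{(m-j,j)}$.

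\emph{Character identity.} Next I identify $\operatorname{Ind}_{S_{m-k}\times S_k}^{S_m}\mathbf 1$ with the permutation representation of $S_m$ on the set $\Omega_k$ of $k$-subsets of $[m]$. The action on $\Omega_k$ is transitive. The stabilizer of the subset $\{m-k+1,\dots,m\}$ is exactly the Young subgroup $S_{m-k}\times S_k$ under the paper's convention, with the two factors acting on the complement and on the subset. The standard fact that the permutation representation on $G/H$ is $\operatorname{Ind}_H^G\mathbf 1$ then gives the identification.

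Alternatively, one can check the same thing directly with the induced character formula: $\operatorname{Ind}\mathbf 1(\pi)$ counts the cosets $gH$ with $\pi gH=gH$, and these correspond to $k$-subsets fixed by $\pi$. By the permutation-character formula, the character value at $\pi$ is the number of fixed $k$-subsets, which is $a_k^{(m)}(\pi)$. Taking characters of both sides of the decomposition then gives $a_k^{(m)}=\sum_{j=0}^k\chi_{(m-j,j)}$.

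\emph{Main difficulty.} The only delicate point is the horizontal-strip bookkeeping, in particular using $k\le m/2$ so that $j$ ranges over all of $0,\dots,k$. Everything else is standard.
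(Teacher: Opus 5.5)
Your proposal is correct and follows essentially the same route as the paper. Both arguments obtain the decomposition by applying the Frobenius characteristic map and Pieri's rule to $h_{m-k}h_k$ (with the same horizontal-strip bookkeeping, using $k\le m-k$), and both identify the induced module with the permutation module on $k$-subsets via the stabilizer of $\{m-k+1,\dots,m\}$, so that its character is $a_k^{(m)}$; the only difference is that you present these two steps in the opposite order, which is immaterial.
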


\begin{proof}
Write
\[
  V=\operatorname{Ind}_{S_{m-k}\times S_k}^{S_m}\mathbf{1}.
\]
Here $\mathbf{1}$ is the one–dimensional trivial representation of the subgroup $S_{m-k}\times S_k$. We first note that $V$ is exactly the permutation representation of $S_m$ on the set of all $k$-element subsets of $[m]$.
Take the standard $k$-element subset
\[
  T_0=\{m-k+1,m-k+2,\ldots,m\}.
\]
Under the natural action of $S_m$, the permutations that stabilise $T_0$ may independently permute the $m-k$ elements of $[m]\setminus T_0$ and the $k$ elements of $T_0$.
Thus the stabiliser of $T_0$ is precisely $S_{m-k}\times S_k$. Here, the stabiliser of an object is the subgroup of all elements of the group that fix that object. For a subgroup $H$ of $S_m$, the left coset $\sigma H$ denotes the set $\{\sigma h:h\in H\}$. It follows that the coset space $S_m/(S_{m-k}\times S_k)$ is in bijection with the set of all $k$-element subsets of $[m]$: the coset $\sigma(S_{m-k}\times S_k)$ corresponds to the subset $\sigma(T_0)$. Hence the induced representation $V$ from the trivial representation is exactly the permutation representation on the basis of all $k$-element subsets. The character value of a permutation representation at $\pi\in S_m$ equals the number of basis vectors fixed by $\pi$. Here each basis vector corresponds to a $k$-element subset $S$, and it is fixed precisely when $\pi(S)=S$. Let $\chi_V$ be the character of $V$. By the definition of $a_k^{(m)}$,
\[
  \chi_V(\pi)=a_k^{(m)}(\pi).
\]
Under the Frobenius character map, the trivial representation of the symmetric group corresponds to the one–row Schur function, i.e.,
\[
  \operatorname{ch}(\mathbf{1}_{S_{m-k}})=h_{m-k}=s_{(m-k)},
  \qquad
  \operatorname{ch}(\mathbf{1}_{S_k})=h_k=s_{(k)}.
\]
The trivial representation of $S_{m-k}\times S_k$ can be written as $\mathbf{1}_{S_{m-k}}\boxtimes\mathbf{1}_{S_k}$. Since induction of an outer tensor product corresponds under the Frobenius character map to multiplication of symmetric functions,
\[
  \operatorname{ch}(V)=h_{m-k}h_k=s_{(m-k)}h_k.
\]
By Pieri's rule, the Schur function expansion of $s_{(m-k)}h_k$ is obtained by adding a horizontal strip of size $k$ to the one–row Young diagram $(m-k)$. If $j$ of the new boxes lie in the second row, the remaining $k-j$ lie in the first row, yielding the partition $((m-k)+(k-j),j)=(m-j,j)$. Since $0\leq j\leq k$ and $k\leq m/2$, we have $m-j\geq j$, so these pairs are all partitions. Because $j\leq k\leq m-k$, the new boxes added to the second row occupy the first $j$ columns, which do not overlap with the new columns extending the first row; this satisfies the horizontal–strip condition ``at most one new box per column''. Conversely, starting from the one–row diagram $(m-k)$ and adding a horizontal strip of size $k$ cannot produce a third row: if a box appeared in the third row, the same column would also have a box in the second row, giving at least two new boxes in that column, contradicting the horizontal–strip condition. Hence the number of new boxes in the second row can only be $j=0,1,\ldots,k$, and no other shapes occur. Pieri's rule also guarantees that each admissible shape has coefficient $1$. Thus
\[
  \operatorname{ch}(V)=\sum_{j=0}^{k}s_{(m-j,j)}.
\]
The Frobenius character map sends the irreducible representation $S^\lambda$ to $s_\lambda$ and preserves direct sums, so
\[
  V\cong\bigoplus_{j=0}^{k}S^{(m-j,j)}.
\]
Taking characters of both sides of this isomorphism, and using $\chi_V=a_k^{(m)}$, yields
\[
  a_k^{(m)}=\sum_{j=0}^{k}\chi_{(m-j,j)}.
\]
\end{proof}

Applying Lemma \ref{lem2.1} with $m=2n$ and $k=n$ and $k=n-1$, respectively, gives
\begin{eqnarray}\label{equ3}
  \chi_{{(n,n)}}=a_n^{(2n)}-a_{n-1}^{(2n)}.
\end{eqnarray}
Since $(n,n)^\top=(2^n)$, it follows from Lemma \ref{lem2.01} that
\begin{eqnarray}\label{equ4}
  \chi_{(2^n)}(\pi)
  =\operatorname{sgn}(\pi)\bigl(a_n^{(2n)}(\pi)-a_{n-1}^{(2n)}(\pi)\bigr).
\end{eqnarray}

\begin{lemma}\label{lem2.2}
Let $n$ be a positive integer and $X=(x_{ij})_{1\leq i,j\leq2n}$ a generic variable matrix. Then each of the following holds.

$(i)$ $\operatorname{Imm}_{(n,n)}(X)=P_n(X)-P_{n-1}(X)$.

$(ii)$  $\operatorname{Imm}_{(2^n)}(X)=D_n(X)-D_{n-1}(X)$.
\end{lemma}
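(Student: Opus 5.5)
We prove both parts at once by substituting the character identities \eqref{equ3} and \eqref{equ4} into the definition of the immanant and then regrouping the sum over permutations as a sum over invariant subsets. Part $(i)$ comes from \eqref{equ3}. For a fixed $k$ we rewrite
\[
  \sum_{\pi\in S_{2n}}a_k^{(2n)}(\pi)\prod_{i=1}^{2n}x_{i\pi(i)}
  =\sum_{\pi\in S_{2n}}\ \sum_{\substack{S\subseteq[2n],\ |S|=k\\ \pi(S)=S}}\ \prod_{i=1}^{2n}x_{i\pi(i)}
  =\sum_{\substack{S\subseteq[2n]\\|S|=k}}\ \sum_{\substack{\pi\in S_{2n}\\ \pi(S)=S}}\ \prod_{i=1}^{2n}x_{i\pi(i)}.
\]
The first equality is the definition of $a_k^{(2n)}$; the second interchanges the two finite sums.

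The key observation is that the permutations with $\pi(S)=S$ are exactly the pairs $(\sigma,\tau)\in\mathfrak S(S)\times\mathfrak S(S^{\mathrm c})$. This holds because $\pi(S)=S$ forces $\pi(S^{\mathrm c})=S^{\mathrm c}$ by bijectivity. For such a pair the monomial factors as
\[
  \prod_{i\in S}x_{i\sigma(i)}\ \prod_{i\in S^{\mathrm c}}x_{i\tau(i)}.
\]
Summing over $\sigma$ and $\tau$ independently therefore gives $\operatorname{per}X[S,S]\,\operatorname{per}X[S^{\mathrm c},S^{\mathrm c}]$. Summing over $|S|=k$ yields $P_k(X)$. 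Linearity in the character, together with \eqref{equ3}, then gives $\operatorname{Imm}_{(n,n)}(X)=P_n(X)-P_{n-1}(X)$. The degenerate cases $S=\varnothing$ (when $n=1$, $k=0$) are covered by the empty-matrix convention.

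Part $(ii)$ uses the same computation with \eqref{equ4}, which carries the extra weight $\operatorname{sgn}(\pi)$. The only additional point is multiplicativity of the sign under the decomposition. If $\pi$ restricts to $\sigma$ on $S$ and to $\tau$ on $S^{\mathrm c}$, then the cycle decomposition of $\pi$ is the disjoint union of those of $\sigma$ and $\tau$. Hence $\operatorname{sgn}(\pi)=\operatorname{sgn}(\sigma)\operatorname{sgn}(\tau)$, where the signs of $\sigma$ and $\tau$ are taken on $\mathfrak S(S)$ and $\mathfrak S(S^{\mathrm c})$.

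To get a Leibniz expansion we must identify $\mathfrak S(S)$ with $S_{|S|}$ through the order-preserving bijection. Conjugation does not change cycle type, so the sign is unaffected. The signed sum then factors as $\det X[S,S]\,\det X[S^{\mathrm c},S^{\mathrm c}]$, and summing over $S$ gives $\operatorname{Imm}_{(2^n)}(X)=D_n(X)-D_{n-1}(X)$.

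There is no real obstacle. The one step needing care is the bijection between stabilizer permutations and pairs $(\sigma,\tau)$, together with the fact that sign and monomial both factor across it; both are elementary.
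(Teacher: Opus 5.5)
Your proposal is correct and follows essentially the same route as the paper: substitute \eqref{equ3} and \eqref{equ4}, write $a_k^{(2n)}(\pi)$ as a sum of indicators over invariant $k$-subsets, interchange the sums, and factor the monomial (and the sign) across $\mathfrak S(S)\times\mathfrak S(S^{\mathrm c})$ to get the permanent (respectively determinant) products. Your explicit handling of the $S=\varnothing$ case and of the sign via the identification $\mathfrak S(S)\cong S_{|S|}$ matches the care the paper takes.
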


\begin{proof}
$(i)$ First fix any $k\in\{0,1,\ldots,2n\}$. By definition of $a_k^{(2n)}(\pi)$, it equals the number of $k$-element subsets fixed by the permutation $\pi$, hence can be written as the sum of indicator functions:
\[
  a_k^{(2n)}(\pi)
  =\sum_{\substack{S\subseteq[2n]\\|S|=k}}
    \mathbf{1}_{\{\pi(S)=S\}}.
\]
Substituting this into the following sum and interchanging the two finite sums gives
\begin{eqnarray*}
  \sum_{\pi\in S_{2n}}a_k^{(2n)}(\pi)
    \prod_i x_{i\pi(i)}
  &=&\sum_{\substack{S\subseteq[2n]\\|S|=k}}
    \sum_{\substack{\pi\in S_{2n}\\\pi(S)=S}}
    \prod_i x_{i\pi(i)}  \\
  &=&\sum_{|S|=k}\operatorname{per} X[S,S]~\operatorname{per} X[S^{\mathrm{c}},S^{\mathrm{c}}]  \\
  &=&P_k(X).
\end{eqnarray*}
For a fixed $S$, the condition $\pi(S)=S$ also implies $\pi(S^{\mathrm{c}})=S^{\mathrm{c}}$. Let $\pi|_S:S\to S$ be the restriction of $\pi$ to $S$; then $\pi$ decomposes uniquely as a permutation $\sigma=\pi|_S$ on $S$ and a permutation $\tau=\pi|_{S^{\mathrm{c}}}$ on $S^{\mathrm{c}}$. The corresponding permutation monomial factorises as
\begin{eqnarray}\label{equa2.3}
  \prod_{i=1}^{2n}x_{i\pi(i)}
  =\left(\prod_{i\in S}x_{i\sigma(i)}\right)
   \left(\prod_{i\in S^{\mathrm{c}}}x_{i\tau(i)}\right).
\end{eqnarray}
Combining \eqref{equa2.3} and summing over $\sigma$ and $\tau$ separately, the two brackets give the permanents of the principal submatrices $X[S,S]$ and $X[S^{\mathrm{c}},S^{\mathrm{c}}]$, respectively, yielding the second equality above. From \eqref{equ3}, $\chi_{(n,n)}=a_n^{(2n)}-a_{n-1}^{(2n)}$. Taking $k=n$ and $k=n-1$ gives $\operatorname{Imm}_{(n,n)}(X)=P_n(X)-P_{n-1}(X)$.

$(ii)$ Again fix $k$. If $\pi(S)=S$, then $\pi$ is the product of two permutations $\pi|_S$ and $\pi|_{S^{\mathrm{c}}}$ acting on disjoint sets. The sign of a permutation is multiplicative. For permutations on finite subsets, we order the elements increasingly and compute the sign; this only gives a conjugate permutation and does not change the sign. Hence
\[
  \operatorname{sgn}(\pi)
  =\operatorname{sgn}(\pi|_S)\operatorname{sgn}(\pi|_{S^{\mathrm{c}}}).
\]
Similar to (i), we obtain
\begin{eqnarray*}
  \sum_{\pi\in S_{2n}}\operatorname{sgn}(\pi)a_k^{(2n)}(\pi)
  \prod_i x_{i\pi(i)}
  &=&\sum_{\substack{S\subseteq[2n]\\|S|=k}}
    \left(\sum_{\sigma\in \mathfrak S(S)}\operatorname{sgn}(\sigma)
      \prod_{i\in S}x_{i\sigma(i)}\right)\\
  &&{}\cdot
    \left(\sum_{\tau\in \mathfrak S(S^{\mathrm{c}})}\operatorname{sgn}(\tau)
      \prod_{i\in S^{\mathrm{c}}}x_{i\tau(i)}\right)\\
  &=&\sum_{|S|=k}\operatorname{det} X[S,S]~\operatorname{det} X[S^{\mathrm{c}},S^{\mathrm{c}}]  \\
  &=&D_k(X).
\end{eqnarray*}
Here the two brackets are the Leibniz determinant expansions of the corresponding principal submatrices. Finally, from \eqref{equ4}, $\chi_{(2^n)}(\pi)=\operatorname{sgn}(\pi)\bigl(a_n^{(2n)}(\pi)-a_{n-1}^{(2n)}(\pi)\bigr)$; taking $k=n$ and $k=n-1$ gives $\operatorname{Imm}_{(2^n)}(X)=D_n(X)-D_{n-1}(X)$.
\end{proof}

\begin{lemma}\label{lem2.3}
Let $n$ be a positive integer. Then the polynomial families $\bigl(\operatorname{Imm}_{(n,n)}\bigr)_{n\geq1}$
and $\bigl(\operatorname{Imm}_{(2^n)}\bigr)_{n\geq1}$ belong to $\mathrm{VNP}$.
\end{lemma}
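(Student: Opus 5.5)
We show membership in $\mathrm{VNP}$ directly from the definition of $p$-definability, using the expressions of Lemma~\ref{lem2.2}. By Lemma~\ref{lem2.2}, $\operatorname{Imm}_{(n,n)}(X)=P_n(X)-P_{n-1}(X)$ and $\operatorname{Imm}_{(2^n)}(X)=D_n(X)-D_{n-1}(X)$. Alternatively, by the proof of that lemma, both immanants have the form
\[
\sum_{\pi\in S_{2n}}c(\pi)\prod_i x_{i\pi(i)},
\qquad c(\pi)=\epsilon(\pi)\bigl(a_n^{(2n)}(\pi)-a_{n-1}^{(2n)}(\pi)\bigr),
\]
with $\epsilon\equiv1$ or $\epsilon=\operatorname{sgn}$. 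The number of variables is $4n^2$ and the degree is $2n$, so both are $p$-families. The plan is to write each as a sum over Boolean vectors of a $p$-computable polynomial.

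We encode a permutation $\pi$ by a $0/1$ matrix $E=(e_{ij})$ with $u(n)=4n^2$ Boolean variables. We also encode a subset $S\subseteq[2n]$ by $2n$ extra Boolean variables $y_1,\dots,y_{2n}$. Then we set
\[
g_n(X,E,y)=\Delta(E)\,\Big(\prod_{i}\big(1-\textstyle\sum_j e_{ij}x_{ij}\cdot 0\big)\Big)\cdots
\]
More precisely, the summand will be
\[
\Delta(E)\cdot\epsilon(E)\cdot\prod_{i=1}^{2n}\Big(\sum_j e_{ij}x_{ij}\Big)\cdot\Big(\kappa_n(y)-\kappa_{n-1}(y)\Big)\cdot\prod_{i,j}\big(1-e_{ij}(y_i-y_j)^2\big).
\]
Here $\Delta(E)=\prod_i\big(\sum_j e_{ij}\big)\prod_j\big(\sum_i e_{ij}\big)\prod_{j<j'}(1-e_{ij}e_{ij'})\prod_{i<i'}(1-e_{ij}e_{i'j})$ is a polynomial-size formula that, on Boolean inputs, equals $1$ exactly when $E$ is a permutation matrix and $0$ otherwise. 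The last product enforces $\pi(S)=S$, because it vanishes iff some $e_{ij}=1$ with $y_i\ne y_j$. The factor $\kappa_k(y)$ is the indicator that $\sum_i y_i=k$, obtained as the coefficient of $t^k$ in $\prod_i(1-y_i+y_it)$. That coefficient is computable by interpolation, or by the standard dynamic programming counting DP, with $O(n^2)$ operations.

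Summing over $y$ gives $a_n^{(2n)}(\pi)-a_{n-1}^{(2n)}(\pi)$, exactly as in the proof of Lemma~\ref{lem2.2}. Summing over $E$ then gives the immanant.

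The main obstacle is the sign factor $\epsilon(E)=\operatorname{sgn}(\pi)$ in the $(2^n)$ case, which must be a $p$-computable polynomial in $E$ on Boolean inputs. We take $\epsilon(E)=\det(E)$, which is $p$-computable by Berkowitz's division-free algorithm and equals $\operatorname{sgn}(\pi)$ on permutation matrices. Alternatively, $\operatorname{sgn}(\pi)=\prod_{i<j}\big(\sum_{k<l}$ inversion indicators$\big)$ can be written as $\prod_{i<j}\big(1-2\sum_{l<k}e_{ik}e_{jl}\big)$, which is visibly a polynomial-size formula. All factors have polynomial size and degree, so $(g_n)$ is $p$-computable and both families are $p$-definable, hence in $\mathrm{VNP}$.

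Alternatively, one can invoke Valiant's criterion: $\operatorname{Imm}_\lambda=\sum_\pi\chi_\lambda(\pi)\prod x_{i\pi(i)}$, and the coefficient function $\pi\mapsto\chi_\lambda(\pi)$ lies in $\#\mathrm{P}/\mathrm{poly}$ by the formula above. The explicit construction above avoids that.
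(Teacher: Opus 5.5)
Your proof is correct and is essentially the paper's argument: the Boolean encoding of $S$ together with a $0/1$ permutation matrix, and the stabiliser factor $\prod_{i,j}\bigl(1-e_{ij}(y_i-y_j)^2\bigr)$, are exactly how the paper places $P_k$ in $\mathrm{VNP}$. The differences are small and both routes work. You fold $a_n^{(2n)}-a_{n-1}^{(2n)}$ into a single summand via $\kappa_n-\kappa_{n-1}$ and take the sign from $\det(E)$. The paper instead treats $D_k$ without encoding $\pi$ at all, using determinants of the masked matrices $I_m-E_s+E_sXE_s$ and $E_s+(I_m-E_s)X(I_m-E_s)$, and it obtains the two differences from a separate closure-under-subtraction argument with an extra Boolean variable. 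Also delete your first, garbled display for $g_n$; the ``more precisely'' summand is the correct one.
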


\begin{proof}
Fix an integer–valued $p$-bounded function $k=k(m)$ with $0\leq k(m)\leq m$ for all $m$, and write down the Boolean summation formulas that show the polynomial families
$\bigl(D_{k(m)}\bigr)_{m\geq1}$ and $\bigl(P_{k(m)}\bigr)_{m\geq1}$ satisfy the definition of VNP. The variables used in the sums are restricted to $0$ or $1$, their number is bounded by a polynomial in $m$, and the number of arithmetic operations needed to compute each summand is also polynomially bounded in $m$. Finally take $m=2n$ and set $k=n$ or $k=n-1$ to obtain the two immanant families in the lemma.

Consider $D_k$. For each $0\leq r\leq m$, define the Lagrange interpolation polynomial
\begin{eqnarray*}
  \delta_r^{(m)}(t)
  :=\prod_{\substack{0\leq q\leq m\\q\ne r}}
    \frac{t-q}{r-q}.
\end{eqnarray*}
When $t=r$, every factor in the product equals $1$. When $t\in\{0,1,\ldots,m\}$ and $t\ne r$, the factor with $q=t$ is $0$. Therefore
\[
  \delta_r^{(m)}(t)=
  \begin{cases}
    1,&t=r,\\
    0,&t\in\{0,1,\ldots,m\}\setminus\{r\}.
  \end{cases}
\]
Each denominator $r-q$ is a nonzero integer. Since $\mathbb{F}$ has characteristic zero, these integers are nonzero in $\mathbb{F}$, hence invertible. Thus $\delta_r^{(m)}$ is a polynomial in $\mathbb{F}[t]$.
For each nonnegative integer $t$, let $I_t$ be the $t\times t$ identity matrix, i.e., all diagonal entries are $1$ and off–diagonal entries are $0$. In particular, $I_0$ denotes the empty matrix.
For $s=(s_1,\ldots,s_m)\in\{0,1\}^m$, set $E_s=\operatorname{diag}(s_1,\ldots,s_m)$, the diagonal matrix with diagonal entries $s_1,\ldots,s_m$. If $S=\{i\in[m]:s_i=1\}$, then
\begin{eqnarray}\label{equa2.1}
  \operatorname{det}(I_m-E_s+E_sXE_s)&=&\operatorname{det} X[S,S]
  \end{eqnarray}
  and
  \begin{eqnarray}\label{equa2.2}
  \operatorname{det}(E_s+(I_m-E_s)X(I_m-E_s))&=&\operatorname{det} X[S^{\mathrm{c}},S^{\mathrm{c}}].
\end{eqnarray}
Indeed, after permuting rows and columns according to the order $S,S^{\mathrm{c}}$, the first matrix becomes $X[S,S]\oplus I_{|S^{\mathrm{c}}|}$, and the second becomes $I_{|S|}\oplus X[S^{\mathrm{c}},S^{\mathrm{c}}]$. Since simultaneous row and column permutations do not change the determinant, equalities \eqref{equa2.1} and \eqref{equa2.2} hold. Because $s_i\in\{0,1\}$, we have $\sum_i s_i=|S|$. Hence $\delta_k^{(m)}(\sum_i s_i)$ is $1$ only when $|S|=k$, so only those subsets survive. Therefore
\begin{eqnarray*}
  D_k(X)
  =\sum_{s\in\{0,1\}^m}
  \delta_k^{(m)}\!\left(\sum_i s_i\right)
  \operatorname{det}(I_m-E_s+E_sXE_s)
  \operatorname{det}(E_s+(I_m-E_s)X(I_m-E_s)).
\end{eqnarray*}
The determinant can be computed without division, and the number of additions and multiplications required is polynomially bounded in $m$. The polynomial $\delta_{k(m)}^{(m)}$ contains only $m$ linear factors, and each entry of the above two matrices is a low–degree polynomial in $X$ and $s$. Hence the number of arithmetic operations to compute each summand is still polynomially bounded in $m$. The whole sum involves only $m$ Boolean variables $s_i$, so the formula meets the definition of VNP. This means $\bigl(D_{k(m)}\bigr)_{m\geq1}\in\mathrm{VNP}$.

Now consider $P_{k(m)}$. In addition to the Boolean variables $s_1,\ldots,s_m$, introduce $m^2$ Boolean variables $y_{ij}$, $1\leq i,j\leq m$, and define
\begin{eqnarray*}
  G_{m,k}(X;s,y)
  &:=\;&\delta_k^{(m)}\!\left(\sum_i s_i\right)
  \prod_i\delta_1^{(m)}\!\left(\sum_jy_{ij}\right)
  \prod_j\delta_1^{(m)}\!\left(\sum_iy_{ij}\right)
  \notag\\
  &&\cdot\prod_{i,j}\bigl(1-y_{ij}(s_i-s_j)^2\bigr)
  \prod_{i,j}\bigl(1-y_{ij}+y_{ij}x_{ij}\bigr).
\end{eqnarray*}
We now explain the role of each factor.
\begin{enumerate}
\item The factor $\delta_k^{(m)}(\sum_i s_i)$ keeps only those $s$ with $|S|=k$, where $S=\{i:s_i=1\}$.
\item For row $i$, the factor $\delta_1^{(m)}(\sum_jy_{ij})$ is nonzero exactly when that row has precisely one $y_{ij}=1$. Similarly, for column $j$, the factor $\delta_1^{(m)}(\sum_iy_{ij})$ is nonzero exactly when that column has precisely one $y_{ij}=1$. Thus the second and third products together ensure that $y=[y_{ij}]$ is a permutation matrix. Denote its associated permutation by $\pi$, i.e., $y_{i,\pi(i)}=1$.
\item When $y_{ij}=0$, the factor $1-y_{ij}(s_i-s_j)^2$ is identically $1$. When $y_{ij}=1$, it equals $1$ if $s_i=s_j$ and $0$ otherwise.
      Hence the fourth product requires $s_i=s_{\pi(i)}$ for all $i$, which is equivalent to $\pi(S)=S$.
\item When $y_{ij}=0$, $1-y_{ij}+y_{ij}x_{ij}=1$. When $y_{ij}=1$, this factor equals $x_{ij}$. Thus the last product gives exactly the permutation monomial $\prod_i x_{i,\pi(i)}$.
\end{enumerate}
Thus, for fixed $s$, the nonzero $y$–terms are in bijection with the permutations that stabilise $S$. Summing over all $s$ gives
\begin{eqnarray*}
  P_k(X)
  =\sum_{s\in\{0,1\}^m}
   \sum_{y\in\{0,1\}^{m^2}}G_{m,k}(X;s,y).
\end{eqnarray*}
For each $k$-element subset $S$, expanding $\operatorname{per} X[S,S]\operatorname{per} X[S^{\mathrm{c}},S^{\mathrm{c}}]$ gives exactly one term for each permutation $\pi$ stabilising $S$. The polynomial $G_{m,k}$ consists of $O(m^2)$ factors and interpolation polynomials of degree at most $m$, so the number of additions, subtractions, and multiplications needed to compute it is polynomially bounded in $m$. The total number of Boolean variables used in the sum is $m+m^2$, which is also polynomially bounded in $m$. Hence $\bigl(P_{k(m)}\bigr)_{m\geq1}\in\mathrm{VNP}$.
From Lemma \ref{lem2.2}, we get that $\operatorname{Imm}_{(n,n)}=P_n-P_{n-1}$ and $\operatorname{Imm}_{(2^n)}=D_n-D_{n-1}$.
Let $(F_n)$ and $(G_n)$ be two VNP families, written as
\[
  F_n(x)=\sum_{u\in\{0,1\}^{a(n)}}f_n(x,u),
  \qquad
  G_n(x)=\sum_{v\in\{0,1\}^{b(n)}}g_n(x,v).
\]
Introduce a new Boolean variable $\beta$ and define
\[
  h_n(x;\beta,u,v)
  =(1-\beta)f_n(x,u)\prod_{j=1}^{b(n)}(1-v_j)
  -\beta g_n(x,v)\prod_{i=1}^{a(n)}(1-u_i).
\]
When $\beta=0$, the product $\prod_j(1-v_j)$ is nonzero only for $v=(0,\ldots,0)$, so summing over $u,v$ gives $F_n(x)$.
When $\beta=1$, the product $\prod_i(1-u_i)$ is nonzero only for $u=(0,\ldots,0)$, and the sum gives $-G_n(x)$. Therefore
\[
  F_n(x)-G_n(x)
  =\sum_{\beta\in\{0,1\}}
   \sum_{u\in\{0,1\}^{a(n)}}
   \sum_{v\in\{0,1\}^{b(n)}}h_n(x;\beta,u,v).
\]
If the number of arithmetic operations needed to compute $f_n$ and $g_n$ is polynomially bounded in $n$, and $a(n),b(n)$ are $p$-bounded, then $h_n$ also satisfies the same requirements. Thus the above formula is exactly the Boolean summation required by the definition of VNP, so $(F_n-G_n)_{n\geq1}\in\mathrm{VNP}$.
Therefore both differences above belong to VNP. Finally take $m=2n$ and set $k=n$ and $k=n-1$, respectively. The number of matrix variables is $(2n)^2$, the degree of the immanants is $2n$, and the number of summation variables as well as the arithmetic cost per summand are polynomially bounded in $n$. Hence both sequences are VNP polynomial families.
\end{proof}

\begin{lemma}\label{lem2.6}
Let $n$ be a positive integer, $A=[a_{ij}]$ an $n\times n$ generic variable matrix (i.e., the $a_{ij}$ are independent indeterminates), and set $m(n)=6n^2+2n$. Then there exists an $m(n)\times m(n)$ matrix $\widehat M_n(A)$, each of whose entries is either one of the variables $a_{ij}$ or a constant from $\mathbb{F}$, such that
\begin{eqnarray*}
  \operatorname{Per}_{-2,m(n)}\bigl(\widehat M_n(A)\bigr)=\operatorname{per}(A).
\end{eqnarray*}
Consequently,
$(\operatorname{per}_n)_{n\geq 1}\leq_p(\operatorname{Per}_{-2,m})_{m\geq 1}$.
\end{lemma}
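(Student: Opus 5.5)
The plan is to take $\widehat M_n(A)=M_n(A)$, the adjacency matrix of the weighted directed graph built above from $A$ and the six‑vertex gadgets with weight matrix $W$. By construction its entries are variables $a_{ij}$ or constants. I will show $\operatorname{Per}_{-2,m(n)}(M_n(A))=\operatorname{per}(A)$ by expanding the $(-2)$‑permanent as a sum over directed cycle covers, each weighted by the product of its edge weights times $(-2)^{\#\text{cycles}}$. The $p$‑projection statement then follows at once, since $m(n)=6n^2+2n$ is polynomially bounded.

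\textbf{Step 1: local gadget signatures.} Every nonzero cover uses all edges $u_{e,r}\to v_{e,r}$, so each pair $u_{e,r}\to v_{e,r}$ can be contracted to a single node $r$. Inside a gadget, the internal edges then form a partial functional graph on $\{1,2,3\}$ with weights $W_{rs}$.

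For each boundary state $(I,O)$ with $|I|=|O|$, together with a matching of entry labels to exit labels (the routing), I define the \emph{signature value}. It is the sum over internal completions of the product of $W$‑entries times $(-2)^{\#\text{closed internal cycles}}$; paths that leave the gadget are not counted as cycles here. Concretely:
\begin{itemize}
\item $\varphi_\varnothing=\operatorname{Per}_{-2,3}(W)$ for the empty state;
\item $\varphi_{r\to s}$ for $I=\{r\}$, $O=\{s\}$;
\item $\varphi_{=}$ for $I=O=\{1,2\}$ with routing $1\to1$, $2\to2$;
\item $\varphi_{\times}$ for $I=O=\{1,2\}$ with routing $1\to2$, $2\to1$.
\end{itemize}
These are finite computations with the explicit $3\times 3$ matrix $W$. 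The target identities are
\[
\varphi_\varnothing=1,\qquad \varphi_{1\to1}=\varphi_{1\to2}=\varphi_{2\to1}=\varphi_{2\to2}=0,\qquad -2\varphi_{=}+4\varphi_{\times}=1.
\]
Checking these is the main obstacle. It is routine but error‑prone: each value is a sum over at most six partial permutations, and one must track exactly which internal cycles close.

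\textbf{Step 2: global structure of surviving covers.} The out‑edge of $L_i$ must go to some $u_{e,1}$ with $e=(i,j)$, so that gadget has $1\in I$. Because the single‑entry signatures vanish, every surviving gadget state is either empty or has $I=O=\{1,2\}$. Hence the gadget entered from $L_i$ is also entered from $R_j$ and exits to both $R_j$ and $L_i$.

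Each $L_i$ and each $R_j$ has exactly one out‑edge. So the set of used gadgets $\{(i,\sigma(i))\}$ defines a permutation $\sigma\in S_n$, and every other gadget is in the empty state, contributing $\varphi_\varnothing=1$. Conversely, every $\sigma$ together with every choice of routing in each used gadget gives a valid cover.

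\textbf{Step 3: summing over covers.} For a used gadget $e=(i,\sigma(i))$, the edge weights are $a_{i\sigma(i)}$ and $1$'s. With the straight routing $\varphi_{=}$, the walk $L_i\to\cdots\to R_{\sigma(i)}\to\cdots\to L_i$ closes into one cycle, giving a factor $-2\varphi_{=}$. With the crossed routing $\varphi_\times$, $L_i$ and $R_{\sigma(i)}$ each lie on their own cycle, giving a factor $4\varphi_\times$. Cycles belonging to different used gadgets are disjoint.

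The total weight therefore factorizes as
\[
\sum_{\sigma\in S_n}\prod_i a_{i\sigma(i)}\,(-2\varphi_{=}+4\varphi_\times)=\operatorname{per}(A),
\]
using the identities from Step 1.
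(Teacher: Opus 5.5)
Your overall strategy is the same as the paper's: contract the mandatory edges, compute a local signature for each boundary state, factorize, and read off a permutation $\sigma$. Your targets $\varphi_\varnothing=\operatorname{Per}_{-2,3}(W)=1$ and $\varphi_{r\to s}=0$ do hold. The gap is the remaining target $-2\varphi_{=}+4\varphi_{\times}=1$: it is false for this gadget, so taking $\widehat M_n(A)=M_n(A)$ gives the wrong polynomial.

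When $I=O=\{1,2\}$, each of the labels $1,2$ is both entered and left externally. The path that starts with the external edge into $u_{e,r}$ is therefore just $u_{e,r}\to v_{e,r}$, followed by the external edge of label $r$. So the crossed routing never occurs, and $\varphi_{\times}=0$ as an empty sum. The only completion is $v_{e,3}\to u_{e,3}$, so $\varphi_{=}=(-2)W_{33}=1$. Each used gadget thus contributes $-2\varphi_{=}+4\varphi_{\times}=-2$, not $1$, and $\operatorname{Per}_{-2,m(n)}(M_n(A))=(-2)^n\operatorname{per}(A)$.

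You can see this already for $n=1$. The unique nonzero cover consists of the cycle $L_1\to u_{e,1}\to v_{e,1}\to R_1\to u_{e,2}\to v_{e,2}\to L_1$ and the loop $u_{e,3}\to v_{e,3}\to u_{e,3}$. It has weight $a_{11}W_{33}=-a_{11}/2$ and two cycles, giving $-2a_{11}$. Separately, in your final display the gadget factor belongs inside $\prod_i$, so it appears to the $n$-th power.

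The missing step is a normalization. A projection cannot multiply its output by $(-2)^{-n}$ afterwards, so the correction must be built into the matrix entries. The paper changes the weights of the $n$ distinct mandatory edges $u_{(i,i),1}\to v_{(i,i),1}$, $i\in[n]$, from $1$ to $-\frac12$. Every nonzero cover contains all mandatory edges, so this multiplies every term by $(-\frac12)^n$ without changing which covers occur or their cycle counts. The result is exactly $\operatorname{per}(A)$.

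An equivalent fix is to give one external edge per gadget weight $-\frac12$, say $v_{e,1}\to R_j$. That edge is used only in singleton states, whose local factors are $0$ anyway, and in the full state, where it cancels the $-2$. With either modification the rest of your argument goes through unchanged.
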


\begin{proof}
For an adjacency matrix $M=[m_{uv}]$ and a permutation $\pi$, the edges corresponding to $\pi$ are $u\to\pi(u)$. Since every vertex chooses exactly one outgoing edge and also exactly one incoming edge, these edges form several vertex–disjoint directed cycles, i.e., a directed cycle cover. The number of cycles is exactly $c(\pi)$, and the weight of the corresponding term is
\[
  \alpha^{c(\pi)}\prod_u m_{u\pi(u)}.
\]
In particular, when $\alpha=-2$, each directed cycle in the cover contributes a factor $-2$. Consider the original matrix $M_n(A)$, with edge weights as defined above. For every $e=(i,j)\in[n]^2$ and $r\in\{1,2,3\}$, the vertex $u_{e,r}$ has only one nonzero–weight outgoing edge, namely $u_{e,r}\longrightarrow v_{e,r}$.
In any nonzero cycle cover, every vertex must choose a nonzero–weight outgoing edge; hence these $3n^2$ edges must appear in every nonzero cycle cover; we shall call them mandatory edges. After choosing these mandatory edges, for each six–vertex subgraph we only need to determine the outgoing edges of the vertices $v_{e,r}$: each may point to some $u_{e,s}$, or, when permitted, to the outside of the six–vertex subgraph. Fix the subgraph corresponding to $e=(i,j)$ and adopt the following convention:
\begin{itemize}
\item $1\in I$ means the external incoming edge $L_i\to u_{e,1}$ is chosen,
      $2\in I$ means the external incoming edge $R_j\to u_{e,2}$ is chosen.
\item $1\in O$ means the external outgoing edge $v_{e,1}\to R_j$ is chosen,
      $2\in O$ means the external outgoing edge $v_{e,2}\to L_i$ is chosen.
\end{itemize}
First fix the choice of all external edges in the whole graph. For each six–vertex subgraph $e$, denote the corresponding label sets by $I_e$ and $O_e$. If $s\in I_e$, then $u_{e,s}$ already has an external incoming edge, so it cannot be pointed to by any $v_{e,r}$. If $r\in O_e$, then the outgoing edge of $v_{e,r}$ already leaves the subgraph, so it cannot connect to any $u_{e,s}$. Thus, the number of $v$ vertices that still need an outgoing edge inside the subgraph is $3-|O_e|$, and the number of $u$ vertices that still need an incoming edge is $3-|I_e|$. If $|I_e|\ne|O_e|$, the numbers differ and a cycle cover cannot be completed, so the contribution of this case is $0$.
When $|I_e|=|O_e|$, an internal completion of this six–vertex subgraph is uniquely represented by a bijection
\[
  \eta_e:\{1,2,3\}\setminus O_e
  \longrightarrow\{1,2,3\}\setminus I_e,
\]
where $\eta_e(r)=s$ means we choose the internal edge $v_{e,r}\to u_{e,s}$. Write
\[
  w_e(\eta_e):=\prod_{r\notin O_e}W_{r,\eta_e(r)}.
\]
The mandatory edges together with the internal edges chosen by $\eta_e$ form in this six–vertex subgraph two kinds of vertex–disjoint directed components: those entirely contained in the six–vertex subgraph (directed cycles), and those that start with an external incoming edge and end with an external outgoing edge (directed paths). Let $\ell_e(\eta_e)$ be the number of directed cycles of the first kind.
The correspondence between external incoming and outgoing edges determined by these directed paths does not depend on $\eta_e$. Indeed, when $|I_e|=|O_e|=0$ there are no such paths; when $|I_e|=|O_e|=1$ there is a unique label for each; when $|I_e|=|O_e|=2$ we have $I_e=O_e=\{1,2\}$, and the two paths connect the mandatory edges of label $1$ and $2$ to the external outgoing edges of the same label. Contract each such directed path together with its two external edges into a single directed edge connecting the external vertices, and temporarily delete the directed cycles completely contained inside the six–vertex subgraph. Doing this for all subgraphs yields a directed graph $\Gamma_0$ that depends only on the already fixed external edges, not on the specific choice of the internal completions $\eta_e$.
If the fixed external edges do not give each $L_i$ and $R_j$ exactly one incoming and one outgoing edge, then no cycle cover exists. Otherwise, let $c_0$ be the number of directed cycles in $\Gamma_0$, and let $w_0$ be the product of the weights of all fixed external edges and mandatory edges. For a choice of internal completions $\boldsymbol\eta=(\eta_e)_{e\in[n]^2}$, denote the corresponding cycle cover by $C(\boldsymbol\eta)$. Contracting directed paths does not change the number of cycles; deleting each internal directed cycle reduces the cycle count by exactly $1$. Hence
\begin{eqnarray*}
  c\bigl(C(\boldsymbol\eta)\bigr)
  &=&c_0+\sum_{e\in[n]^2}\ell_e(\eta_e),\\
  \operatorname{wt}\bigl(C(\boldsymbol\eta)\bigr)
  &=&w_0\prod_{e\in[n]^2}w_e(\eta_e).
\end{eqnarray*}
Thus, after fixing the external edges, summing over all internal completions gives
\begin{eqnarray*}
 &&\sum_{\boldsymbol\eta}
 (-2)^{c(C(\boldsymbol\eta))}\operatorname{wt}(C(\boldsymbol\eta))\\
 &=&(-2)^{c_0}w_0
 \prod_{e\in[n]^2}\left(\sum_{\eta_e}
 (-2)^{\ell_e(\eta_e)}w_e(\eta_e)\right).
\end{eqnarray*}
Therefore the local sum for each six–vertex subgraph is indeed an independent factor. Even if the directed paths belong to a global directed cycle spanning several subgraphs, the factor contributed by the remaining cycles is independent of the internal completion of that subgraph.
Fix a six–vertex subgraph $e=(i,j)$, and abbreviate $I=I_e$, $O=O_e$. Every possible internal completion consists of edges $v_{e,r}\to u_{e,s}$ with weights $W_{rs}$; the $-2$ factor in the local sum is counted only according to the number $\ell_e(\eta_e)$ of directed cycles entirely inside this component.

Suppose that $I=O=\varnothing$. Then each $v_{e,r}$ must point inside the six–vertex subgraph to some $u_{e,s}$, and each of the three $u$ vertices receives exactly one incoming edge. The three mandatory edges $u_{e,r}\to v_{e,r}$ are always present and have weight $1$, so in counting we may regard each pair $(u_{e,r},v_{e,r})$ as a single object labelled $r$. Then choosing edges $v_{e,r}\to u_{e,s}$ corresponds bijectively to permutations in $S_3$, and the number of cycles of the permutation equals the number of directed cycles inside this six–vertex subgraph. Hence the total contribution of all internal edge choices is $\operatorname{Per}_{-2,3}(W)=1$. Suppose $I=O=\{1\}$. Then the external incoming and outgoing edges of label $1$ are already chosen, and we only need to determine the internal edges for labels $2,3$. The sum of all such choices contains the factor
\[
  \operatorname{Per}_{-2}
  \begin{pmatrix}
    W_{22}&W_{23}\\
    W_{32}&W_{33}
  \end{pmatrix}
  =\operatorname{Per}_{-2}
  \begin{pmatrix}
    \frac12&1\\[-1mm]
    -\frac12&-\frac12
  \end{pmatrix}=0.
\]
Thus, regardless of how the external edges outside this subgraph are chosen, the contributions from internal completions with $I=O=\{1\}$ cancel. For $I=O=\{2\}$, the external edges of label $2$ are fixed, and the remaining labels are $1,3$; the relevant submatrix is
\[
  \begin{pmatrix}
    W_{11}&W_{13}\\
    W_{31}&W_{33}
  \end{pmatrix}
  =
  \begin{pmatrix}
    -1&1\\
    1&-\frac12
  \end{pmatrix},
\]
whose $(-2)$-permanent is
\[
  4W_{11}W_{33}-2W_{13}W_{31}
  =4(-1)\left(-\frac12\right)-2(1)(1)=0.
\]
Thus when $I=O$ and both sets are singletons, the local contribution is also $0$.

Suppose that $I=\{1\},O=\{2\}$. Then we must choose edges from $v_{e,1},v_{e,3}$ to $u_{e,2},u_{e,3}$, respectively, and each $u$ vertex is targeted exactly once. Hence there are only two possible completions. The first chooses weights $W_{12}$ and $W_{33}$, forming two directed cycles:
\[
  L_i\to u_{e,1}\to v_{e,1}\to u_{e,2}
  \to v_{e,2}\to L_i,
  \qquad
  u_{e,3}\to v_{e,3}\to u_{e,3}.
\]
The second chooses weights $W_{13}$ and $W_{32}$, forming one directed cycle:
\[
  L_i\to u_{e,1}\to v_{e,1}\to u_{e,3}
  \to v_{e,3}\to u_{e,2}\to v_{e,2}\to L_i.
\]
Both cycle structures use the two external edges $L_i\to u_{e,1}$ and $v_{e,2}\to L_i$, so they are independent of edges outside the subgraph. The two completions differ by one in the number of cycles, so the $(-2)^{c(\pi)}$ factors differ by a factor $-2$. Factoring out the common external edge weight and one $-2$ gives the remaining sum
$(-2)W_{12}W_{33}+W_{13}W_{32}=0$.
Similarly, for $I=\{2\},O=\{1\}$, we choose from $v_{e,2},v_{e,3}$ to $u_{e,1},u_{e,3}$. Choosing $W_{21}$ and $W_{33}$ gives two cycles:
\[
  R_j\to u_{e,2}\to v_{e,2}\to u_{e,1}
  \to v_{e,1}\to R_j,
  \qquad
  u_{e,3}\to v_{e,3}\to u_{e,3}.
\]
Choosing $W_{23}$ and $W_{31}$ gives one cycle:
\[
  R_j\to u_{e,2}\to v_{e,2}\to u_{e,3}
  \to v_{e,3}\to u_{e,1}\to v_{e,1}\to R_j.
\]
Again the number of cycles differs by one, and the remaining sum is
$(-2)W_{21}W_{33}+W_{23}W_{31}=0$.
Hence whenever $I$ and $O$ are both singletons with different labels, the local contribution is also $0$.

Suppose that $I=O=\{1,2\}$. Then all four external edges are chosen. The mandatory edges of labels $1,2$ together with the vertices $L_i,R_j$ form the directed cycle
\[
  L_i\to u_{e,1}\to v_{e,1}\to R_j
  \to u_{e,2}\to v_{e,2}\to L_i .
\]
This directed cycle contributes a factor $-2$, and the edge $L_i\to u_{e,1}$ also contributes the variable factor $a_{ij}$. For labels $3$, the only possible choice is $u_{e,3}\to v_{e,3}\to u_{e,3}$, which forms another directed cycle with contribution $(-2)W_{33}=(-2)\left(-\frac12\right)=1$. Thus the total contribution for $I=O=\{1,2\}$ is $(-2)a_{ij}\cdot 1=-2a_{ij}$.
In summary, denote by
\[
  Z_e(I_e,O_e):=\sum_{\eta_e}
  (-2)^{\ell_e(\eta_e)}w_e(\eta_e)
\]
the local factor corresponding to component $e$ in the factorisation above. The computation shows
\begin{eqnarray*}
  Z_e(I_e,O_e)
  &=&\begin{cases}
    1,&(I_e,O_e)=(\varnothing,\varnothing),\\
    1,&(I_e,O_e)=(\{1,2\},\{1,2\}),\\
    0,&\text{otherwise}.
  \end{cases}
\end{eqnarray*}
In the case $I_e=O_e=\{1,2\}$, the local factor is $(-2)W_{33}=1$; the two contracted boundary paths form a directed cycle through $L_i$ and $R_j$, contributing $-2$, and the external edge $L_i\to u_{e,1}$ contributes $a_{ij}$. Together these give the $-2a_{ij}$ computed above. From the local–global factorisation, if for some six–vertex subgraph the pair $(I_e,O_e)$ is not one of the two nonzero cases, then the total contribution of all cycle covers for that fixed external–edge choice is $0$. Hence we only need to consider subgraphs that either use no external edges or use all four external edges.
For each $i$, among the subgraphs $(i,1),\ldots,(i,n)$ exactly one must use all four external edges. Similarly, for each $j$, among $(1,j),\ldots,(n,j)$ exactly one must use all four. Thus these subgraphs uniquely determine a permutation $\sigma\in S_n$, namely they are exactly $\{(i,\sigma(i)):i\in[n]\}$. Fix such a $\sigma$. After contracting the boundary paths inside each chosen subgraph, for each $i\in[n]$ the component $(i,\sigma(i))$ gives a directed cycle passing through $L_i$ and $R_{\sigma(i)}$. These cycles are vertex–disjoint, and there are no other directed cycles in the contracted graph, so $c_0=n$. The product of the fixed external edge weights is
\[
  w_0=\prod_{i=1}^n a_{i,\sigma(i)},
\]
and all local factors are $1$. Therefore the total contribution of all cycle covers corresponding to $\sigma$ is
\[
  (-2)^n\prod_{i=1}^n a_{i,\sigma(i)}.
\]
Summing over all $\sigma\in S_n$ yields
\begin{eqnarray*}
  \operatorname{Per}_{-2,m(n)}(M_n(A))
  &=&(-2)^n\sum_{\sigma\in S_n}\prod_{i=1}^n a_{i,\sigma(i)}\\
  &=&(-2)^n\operatorname{per}(A).
\end{eqnarray*}
Every nonzero cycle cover contains all mandatory edges $u_{e,r}\to v_{e,r}$. To make the construction completely explicit, for each $i\in[n]$ we fix the mandatory edge $u_{(i,i),1}\longrightarrow v_{(i,i),1}$. These $n$ edges are distinct. Change their weights from $1$ to $-1/2$. This does not change the allowed cycle covers nor the number of cycles in any cover; it merely multiplies the total weight of every nonzero term by $\left(-\frac12\right)^n$.
Denote the modified adjacency matrix by $\widehat M_n(A)$. By linearity of the sum,
\[
  \operatorname{Per}_{-2,m(n)}(\widehat M_n(A))
  =
  (-1/2)^n(-2)^n\operatorname{per}(A)
  =
  \operatorname{per}(A).
\]
Finally, the size of the resulting matrix is $m(n)=6n^2+2n$. Each entry is either one of the original variables $a_{ij}$ or a constant from $\mathbb{F}$. The constants that appear are only
$0,\ 1,\ -1,\ \frac12,\ -\frac12$. Therefore this matrix construction fits the definition of a projection. Since $m(n)$ is quadratic in $n$, these substitutions form a $p$-projection, i.e.,
$(\operatorname{per}_n)_{n\geq1}\leq_p\bigl(\operatorname{Per}_{-2,m}\bigr)_{m\geq1}$.
\end{proof}

\begin{lemma}\label{lem2.7}
Let $X$ and $Y$ be square matrices. Then each of the following holds.
\begin{enumerate}
\renewcommand{\labelenumi}{(\roman{enumi})}
  \item If $X$ is $m\times m$, then
  \begin{eqnarray*}
    \Phi_X(z)=z^m\Phi_X(z^{-1}).
  \end{eqnarray*}
  \item The polynomial $\Phi_X(z)$ is multiplicative under direct sums:
  \begin{eqnarray*}
    \Phi_{X\oplus Y}(z)=\Phi_X(z)\Phi_Y(z).
  \end{eqnarray*}
  \item If $X$ is $m\times m$, then
  \begin{eqnarray*}
    \Phi_X(1)=(-1)^m\operatorname{Per}_{-2,m}(X).
  \end{eqnarray*}
\end{enumerate}
\end{lemma}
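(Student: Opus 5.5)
All three parts follow directly from the definition
\[
\Phi_X(z)=\sum_{S\subseteq[m]}\det X[S,S]\,\det X[S^{\mathrm c},S^{\mathrm c}]\,z^{|S|}.
\]
Parts (i) and (ii) are bijective reindexings of this sum; part (iii) needs one Leibniz expansion and a binomial count.

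For (i), substitute $S\mapsto S^{\mathrm c}$, a bijection on subsets of $[m]$. The product $\det X[S,S]\det X[S^{\mathrm c},S^{\mathrm c}]$ is symmetric in $S$ and $S^{\mathrm c}$. Hence the coefficient of $z^{k}$ equals that of $z^{m-k}$, i.e.\ $D_k(X)=D_{m-k}(X)$. This is exactly the identity $\Phi_X(z)=z^m\Phi_X(z^{-1})$ in $\mathbb F[z,z^{-1}]$.

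For (ii), let $X$ be $m\times m$ and $Y$ be $p\times p$, and index $X\oplus Y$ by $[m]\sqcup([m+p]\setminus[m])$. Every $S\subseteq[m+p]$ splits uniquely as $S=S_1\sqcup S_2$ with $S_1\subseteq[m]$ and $S_2$ in the second block. The principal submatrix $(X\oplus Y)[S,S]$ is the direct sum $X[S_1,S_1]\oplus Y[S_2,S_2]$, and similarly for the complement. Determinants are multiplicative on block diagonal matrices, and $z^{|S|}=z^{|S_1|}z^{|S_2|}$. So the sum factors as $\Phi_X(z)\Phi_Y(z)$.

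For (iii), set $z=1$ and expand both determinants by Leibniz, as in the proof of Lemma \ref{lem2.2}(ii). This gives
\[
\Phi_X(1)=\sum_{\pi\in S_m}\operatorname{sgn}(\pi)\,\#\{S:\pi(S)=S\}\prod_i x_{i\pi(i)}.
\]
Here we used $\operatorname{sgn}(\pi|_S)\operatorname{sgn}(\pi|_{S^{\mathrm c}})=\operatorname{sgn}(\pi)$. A subset is $\pi$-invariant exactly when it is a union of cycles of $\pi$, so the count equals $2^{c(\pi)}$. Also $\operatorname{sgn}(\pi)=(-1)^{m-c(\pi)}$. The weight of $\pi$ is therefore
\[
(-1)^{m}(-1)^{c(\pi)}2^{c(\pi)}=(-1)^m(-2)^{c(\pi)},
\]
and summing over $\pi$ gives $(-1)^m\operatorname{Per}_{-2,m}(X)$.

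The only step needing care is this sign and count bookkeeping in (iii), specifically the identity $\operatorname{sgn}(\pi)=(-1)^{m-c(\pi)}$. It holds because a cycle of length $\ell$ has sign $(-1)^{\ell-1}$, and the cycle lengths sum to $m$.
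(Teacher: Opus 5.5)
Your proposal is correct and follows the paper's proof essentially step for step. Part (i) uses the complement bijection to get $D_k(X)=D_{m-k}(X)$; part (ii) splits each subset across the two blocks and uses multiplicativity of block-diagonal determinants; part (iii) uses the Leibniz expansion, the count $2^{c(\pi)}$ of $\pi$-invariant subsets, and the identity $\operatorname{sgn}(\pi)=(-1)^{m-c(\pi)}$.
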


\begin{proof}
$(i)$ First compare $D_k(X)$ and $D_{m-k}(X)$. The map $S\mapsto S^{\mathrm{c}}$ gives a bijection between the $k$-element subsets of $[m]$ and the $(m-k)$-element subsets, and the product $\operatorname{det} X[S,S]\operatorname{det} X[S^{\mathrm{c}},S^{\mathrm{c}}]$ is unchanged when $S$ and $S^{\mathrm{c}}$ are interchanged. Hence
\[
  D_k(X)=D_{m-k}(X),\qquad 0\leq k\leq m.
\]
Thus
\begin{eqnarray*}
  z^m\Phi_X(z^{-1})
  &=&z^m\sum_{k=0}^mD_k(X)z^{-k}
    =\sum_{k=0}^mD_k(X)z^{m-k}\\
  &=&\sum_{j=0}^mD_{m-j}(X)z^j
    =\sum_{j=0}^mD_j(X)z^j
    =\Phi_X(z),
\end{eqnarray*}
where the third equality uses the substitution $j=m-k$, and the fourth uses $D_{m-j}=D_j$. This shows that the coefficient sequence
$D_0(X),D_1(X),\ldots,D_m(X)$ is symmetric with respect to the formal index $m$, i.e.,
$\Phi_X(z)=z^m\Phi_X(z^{-1})$. This does not claim that the actual degree of $\Phi_X$ is $m$; for example, if $\det X=0$, its leading coefficient may vanish. We shall only use the above self–inverse identity and the coefficient relation $D_j(X)=D_{m-j}(X)$.

$(ii)$ Suppose $X$ and $Y$ have sizes $p$ and $q$, respectively. For any principal submatrix of $X\oplus Y$ corresponding to a subset $T\subseteq[p+q]$, we can uniquely extract a subset $S\subseteq[p]$ from the first block and a subset $R\subseteq[q]$ from the second block:
\[
  S=T\cap[p],
  \qquad
  R=\{j\in[q]:p+j\in T\}.
\]
Here $S^{\mathrm{c}}$ denotes the complement of $S$ in $[p]$, and $R^{\mathrm{c}}$ the complement of $R$ in $[q]$. After subtracting $p$ from the row and column indices in the second block, we have
$(X\oplus Y)[T,T]=X[S,S]\oplus Y[R,R]$. Hence
\[
  \det (X\oplus Y)[T,T]
  =\det X[S,S]\det Y[R,R].
\]
The complement decomposes in the same way, and $|T|=|S|+|R|$. Therefore
\begin{eqnarray*}
  \Phi_{X\oplus Y}(z)
  &=&\sum_{S\subseteq[p]}\sum_{R\subseteq[q]}
    \det X[S,S]\det X[S^{\mathrm{c}},S^{\mathrm{c}}]\\
  &&{}\cdot
    \det Y[R,R]\det Y[R^{\mathrm{c}},R^{\mathrm{c}}]
    z^{|S|+|R|}\\
  &=&\left(\sum_{S\subseteq[p]}
    \det X[S,S]\det X[S^{\mathrm{c}},S^{\mathrm{c}}]z^{|S|}\right)\\
  &&{}\cdot
    \left(\sum_{R\subseteq[q]}
    \det Y[R,R]\det Y[R^{\mathrm{c}},R^{\mathrm{c}}]z^{|R|}\right)\\
  &=&\Phi_X(z)\Phi_Y(z).
\end{eqnarray*}

$(iii)$ Let $X=[x_{ij}]$ be $m\times m$. Then
\[
  \Phi_X(1)=\sum_{S\subseteq[m]}
  \det X[S,S]\det X[S^{\mathrm{c}},S^{\mathrm{c}}].
\]
Fix $S$ and expand the two determinants using Leibniz' formula. The first determinant chooses a permutation $\sigma$ on $S$, the second chooses a permutation $\tau$ on $S^{\mathrm{c}}$. Combining $\sigma$ and $\tau$ gives a permutation $\pi\in S_m$ that stabilises $S$. Conversely, if $\pi(S)=S$, then the restrictions of $\pi$ to $S$ and $S^{\mathrm{c}}$ uniquely determine $\sigma$ and $\tau$. Since
\[
  \operatorname{sgn}(\sigma)\operatorname{sgn}(\tau)=\operatorname{sgn}(\pi),
  \qquad
  \left(\prod_{i\in S}x_{i\sigma(i)}\right)
  \left(\prod_{i\in S^{\mathrm{c}}}x_{i\tau(i)}\right)
  =\prod_{i=1}^m x_{i\pi(i)},
\]
we can first fix $\pi$ and count the number of subsets $S$ with $\pi(S)=S$.
Write $\pi$ as a product of disjoint cycles. A subset $S$ is invariant under $\pi$ iff for each cycle of $\pi$, $S$ contains either all elements of that cycle or none. If $\pi$ has $c(\pi)$ cycles, then each cycle can be independently chosen to be included in $S$ or not. Hence the number of such $S$ is $2^{c(\pi)}$. For all these $S$, the sign and the permutation monomial are the same. Therefore
\begin{eqnarray*}
  \Phi_X(1)
  &=&\sum_{\pi\in S_m}\operatorname{sgn}(\pi)2^{c(\pi)}
    \prod_i x_{i\pi(i)}  \\
  &=&(-1)^m\sum_{\pi\in S_m}(-2)^{c(\pi)}
    \prod_i x_{i\pi(i)}.
\end{eqnarray*}
The second equality comes from the following sign computation. The sign of a cycle of length $\ell$ is $(-1)^{\ell-1}$. Multiplying the signs of all disjoint cycles of $\pi$ gives $\operatorname{sgn}(\pi)=(-1)^{m-c(\pi)}$.
Thus
\[
  \operatorname{sgn}(\pi)2^{c(\pi)}
  =(-1)^{m-c(\pi)}2^{c(\pi)}
  =(-1)^m(-2)^{c(\pi)}.
\]
By the definition of the $(-2)$-permanent, we obtain $\Phi_X(1)=(-1)^m\operatorname{Per}_{-2,m}(X)$.
\end{proof}

\begin{lemma}\label{lem2.8}
Let $m\geq1$, $q=q(m)$, $L=L(m)$, $r=r(m)$ and $N=N(m)$. Then $N(m)$ is an integer, and for every polynomial $H\in\mathbb{F}[z]$ satisfying
\begin{eqnarray}\label{equ10}
  H(z)=z^mH(z^{-1})
\end{eqnarray}
we have
\begin{eqnarray}\label{equ11}
  \bigl([z^N]-[z^{N-1}]\bigr)H(z)K_m(z)=H(1).
\end{eqnarray}
\end{lemma}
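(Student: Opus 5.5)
The plan is to reduce \eqref{equ11} to an explicit description of the coefficients of $K_m$ near its middle degree, and then use the symmetry \eqref{equ10} of $H$ to kill a linear term.

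\textbf{Integrality of $N$.} Since $L=2^q-1$ is odd, $3L$ is odd. If $m$ is odd, then $r=3L$ and $m+r$ is even. If $m$ is even, then $r=3L+1$ and $m+r$ is again even. Hence $N=(m+r)/2\in\mathbb{Z}$.

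\textbf{Reduction to first differences of $K_m$.} Write $H(z)=\sum_{j=0}^m h_jz^j$. Then \eqref{equ10} says exactly $h_j=h_{m-j}$, and $H$ has no terms outside degrees $0,\ldots,m$. Write $K_m(z)=\sum_d k_dz^d$ and set $\Delta_d:=k_d-k_{d-1}$. Expanding the product gives
\[
\bigl([z^N]-[z^{N-1}]\bigr)H(z)K_m(z)=\sum_{j=0}^m h_j\,\Delta_{N-j}.
\]
So it suffices to compute $\Delta_d$ for $d$ in the window $N-m\le d\le N$.

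\textbf{Coefficients of $S_L(z)^3$ in the middle range.} The coefficient $c_d$ of $z^d$ in $S_L^3$ counts triples $(a,b,c)\in\{0,\ldots,L\}^3$ with $a+b+c=d$. By inclusion–exclusion, for $L\le d\le 2L$,
\[
c_d=\binom{d+2}{2}-3\binom{d-L+1}{2},
\]
and only the single excess term survives because $d<2L+2$. Hence for $L+1\le d\le 2L$ we get $c_d-c_{d-1}=(d+1)-3(d-L)=3L+1-2d$.

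\textbf{Case $m$ odd.} Here $K_m=S_L^3$ and $N=(m+3L)/2$. The window $[N-m-1,N]$ lies inside $[L+1,2L]$: this uses $L\ge 2m+7$, since $N-m-1=(3L-m)/2-1\ge L+1$ and $N\le 2L$. Therefore
\[
\Delta_{N-j}=3L+1-2N+2j=1+(2j-m).
\]

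\textbf{Case $m$ even.} Here $k_d=(c_d+c_{d-1})/2$, so $\Delta_d=\tfrac12\bigl[(c_d-c_{d-1})+(c_{d-1}-c_{d-2})\bigr]=3L+2-2d$, valid once the window, shifted down by one, still sits inside $[L+1,2L]$; the margin $L\ge 2m+7$ again covers this. With $N=(m+3L+1)/2$ this gives
\[
\Delta_{N-j}=3L+2-(m+3L+1)+2j=1+(2j-m).
\]

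\textbf{Conclusion.} In both cases
\[
\sum_j h_j\Delta_{N-j}=\sum_j h_j+\sum_j(2j-m)h_j .
\]
The second sum vanishes, because pairing $j$ with $m-j$ and using $h_j=h_{m-j}$ turns it into its own negative. What remains is $\sum_jh_j=H(1)$, which is \eqref{equ11}.

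The main obstacle is the bookkeeping that the whole index window $[N-m-2,N]$ stays in the quadratic regime $[L,2L]$ of $S_L^3$. I will check the two endpoint inequalities explicitly against $2^q-1\ge 2m+7$. The power-of-two shape of $L$ is not needed here beyond the oddness of $L$.
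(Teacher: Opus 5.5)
Your proof is correct and follows essentially the same route as the paper. In both, the inclusion--exclusion formula $\binom{d+2}{2}-3\binom{d-L+1}{2}$ for the middle coefficients of $S_L(z)^3$, which holds on the whole index window because $L\geq 2m+7$, reduces the identity to the quadratic profile of the coefficients of $K_m$ there.

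The only difference is bookkeeping. You compute the first differences explicitly as $k_{N-j}-k_{N-j-1}=1+(2j-m)$ and cancel the linear part using $h_j=h_{m-j}$. The paper instead uses $\Delta^2k_j=-2$ together with the palindromicity of $K_m$ and pairs the symmetric monomials of $H$. Your version therefore treats both parities uniformly and never needs $K_m$ to be palindromic.
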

\begin{proof}
We first examine the degree, parity and coefficient symmetry of $K_m$. Since $L=2^q-1$ is odd, when $m$ is odd,
$K_m(z)=S_L(z)^3$ has degree $r=3L$, still odd. When $m$ is even, $K_m(z)=\frac{1+z}{2}S_L(z)^3$ has degree $r=3L+1$, even.
Thus in both cases $m$ and $r$ have the same parity, so $m+r$ is even and $N=(m+r)/2$ is indeed an integer. Since $z^LS_L(z^{-1})=S_L(z)$, $S_L$ is palindromic of degree $L$. Also $(1+z)/2$ is palindromic of degree $1$. Since products of palindromic polynomials are palindromic, in both cases $K_m$ is palindromic of degree $r$.
Write
\[
  S_L(z)^3=\sum_j b_jz^j.
\]
Using
\[
  S_L(z)^3=\frac{(1-z^{L+1})^3}{(1-z)^3},
  \qquad
  \frac1{(1-z)^3}=\sum_{t\geq0}\binom{t+2}{2}z^t,
\]
where $\binom{u}{2}=u(u-1)/2$. The second equality is the binomial expansion in the sense of formal power series, with coefficient $\binom{t+2}{2}$ for $z^t$.
Expanding the numerator gives $(1-z^{L+1})^3=1-3z^{L+1}+3z^{2L+2}-z^{3L+3}$.
For $L+1\leq j\leq2L+1$, the last two terms have degree at least $2L+2$, so they do not affect the coefficient of $z^j$. Hence
\begin{eqnarray}\label{equ12}
  b_j=\binom{j+2}{2}-3\binom{j-L+1}{2}.
\end{eqnarray}
For any sequence $f_j$, define its central second difference as
\[
  \Delta^2f_j=f_{j+1}-2f_j+f_{j-1}.
\]
A direct computation gives
\[
  \binom{j+3}{2}-2\binom{j+2}{2}+\binom{j+1}{2}=1.
\]
Thus the central second difference of the quadratic polynomial $\binom{j+2}{2}$ is $1$. Translating the variable by $L+1$ does not change the second difference, so $\binom{j-L+1}{2}$ also has central second difference $1$. From \eqref{equ12}, the central second difference of $b_j$ is therefore $1-3=-2$.
To ensure that $j-1,j,j+1$ all lie in the range where \eqref{equ12} applies, take $L+2\leq j\leq2L$, giving
\begin{eqnarray}\label{equ13}
  \Delta^2 b_j=-2,
  \qquad L+2\leq j\leq 2L.
\end{eqnarray}
Now write $K_m(z)=\sum_jk_jz^j$. When $m$ is odd, $k_j=b_j$. When $m$ is even, $k_j=\frac{b_j+b_{j-1}}2$. Since the second difference is linear,
\[
  \Delta^2k_j
  =\frac12\bigl(\Delta^2b_j+\Delta^2b_{j-1}\bigr).
\]
For $L+3\leq j\leq2L$, both $j$ and $j-1$ lie in the range of \eqref{equ13}, hence
\begin{eqnarray}\label{equ14}
  \Delta^2k_j=-2,
  \qquad L+3\leq j\leq 2L.
\end{eqnarray}
Interpreting $H(z)=z^mH(z^{-1})$ as an identity in Laurent polynomials, since the left side contains no negative powers, $H$ has zero coefficients outside the degree range $0,1,\ldots,m$. Within this range, it also says that the coefficients of $z^j$ and $z^{m-j}$ in $H$ are equal. First consider the case $m=2d$. Then $r=3L+1$ is even, write $r=2C$, so $N=d+C$. Here
\[
  C=\frac{3L+1}{2},\qquad d=\frac m2.
\]
Since $L\geq2m+7$, we have $L\geq m+5$, hence
\begin{eqnarray*}
  C-d&=&\frac{3L+1-m}{2}\geq L+3,\\
  C&\leq&2L.
\end{eqnarray*}
Thus for every $0\leq u\leq d$, we have $L+3\leq C-u\leq2L$. Therefore, when applying \eqref{equ14} at $j=C$ and at $j=C-u$ below, the index range is satisfied. Because the coefficients of $H$ are symmetric about the midpoint $d$, we may pair equal coefficients. Hence $H$ can be uniquely written as
\begin{eqnarray}\label{equ15}
  H(z)=h_0z^d+
  \sum_{u=1}^{d}h_u\bigl(z^{d-u}+z^{d+u}\bigr).
\end{eqnarray}
Since $K_m$ is palindromic of degree $2C$, for every integer $t$ with $|t|\leq C$ we have $k_{C+t}=k_{C-t}$. In particular, $k_{C+1}=k_{C-1}$. Setting $j=C$ in \eqref{equ14} gives $k_{C+1}-2k_C+k_{C-1}=-2$.
Using $k_{C+1}=k_{C-1}$, we obtain $k_C-k_{C-1}=1$. Therefore, for the middle monomial $z^d$,
\begin{eqnarray*}
  \bigl([z^N]-[z^{N-1}]\bigr)z^dK_m=k_C-k_{C-1}=1.
\end{eqnarray*}
Now fix $1\leq u\leq d$. Extracting coefficients by shifting the exponent gives
\begin{eqnarray*}
  &&\bigl([z^N]-[z^{N-1}]\bigr)
    \bigl(z^{d-u}+z^{d+u}\bigr)K_m\notag\\
  &=&(k_{C+u}-k_{C+u-1})
       +(k_{C-u}-k_{C-u-1})\notag\\
  &=&2k_{C-u}-k_{C-u+1}-k_{C-u-1}=2.
\end{eqnarray*}
The second equality uses the coefficient symmetry of degree $2C$:
\[
  k_{C+u}=k_{C-u},
  \qquad
  k_{C+u-1}=k_{C-u+1}.
\]
The last equality uses $2k_j-k_{j+1}-k_{j-1}=-\Delta^2k_j=2$, where $j=C-u$.
Applying the above computation to each term in the decomposition \eqref{equ15} yields
\[
  \bigl([z^N]-[z^{N-1}]\bigr)H(z)K_m(z)
  =h_0+2\sum_{u=1}^{d}h_u.
\]
On the other hand, setting $z=1$ in \eqref{equ15} gives exactly
$H(1)=h_0+2\sum_{u=1}^dh_u$. Hence when $m$ is even, \eqref{equ11} holds.
Now consider $m=2d+1$. Then $r=3L$ is odd, write $r=2C+1$, and $N=d+C+1$. Here
\[
  C=\frac{3L-1}{2},\qquad d=\frac{m-1}{2}.
\]
Since $L\geq2m+7$, we have $L\geq m+4$, hence
\begin{eqnarray*}
  C-d&=&\frac{3L-m}{2}\geq L+2,\\
  C&\leq&2L.
\end{eqnarray*}
Thus for every $0\leq u\leq d$, we have $L+2\leq C-u\leq2L$. Therefore \eqref{equ13} applies at $j=C-u$. Because the coefficients of $z^j$ and $z^{m-j}$ in $H$ are equal, $H$ can be uniquely written as
\begin{eqnarray*}
  H(z)=\sum_{u=0}^{d}h_u
  \bigl(z^{d-u}+z^{d+1+u}\bigr).
\end{eqnarray*}
For each pair of monomials, extracting the coefficients of $z^N$ and $z^{N-1}$ gives
\begin{eqnarray*}
  &&\bigl([z^N]-[z^{N-1}]\bigr)
  \bigl(z^{d-u}+z^{d+1+u}\bigr)K_m\notag\\
  &=&k_{C+1+u}-k_{C+u}
       +k_{C-u}-k_{C-u-1}\notag\\
  &=&2k_{C-u}-k_{C-u+1}-k_{C-u-1}=2.
\end{eqnarray*}
Since $K_m=S_L^3$ has degree $2C+1$, its coefficients are symmetric about the half–integer centre $C+1/2$, i.e.,
$k_{C+1+u}=k_{C-u}$ and $k_{C+u}=k_{C-u+1}$.
Also, when $m$ is odd, $k_j=b_j$. Applying \eqref{equ13} at $j=C-u$ gives
$-\Delta^2k_{C-u}=2$. Summing over $u=0,1,\ldots,d$ yields
\[
  \bigl([z^N]-[z^{N-1}]\bigr)H(z)K_m(z)
  =2\sum_{u=0}^{d}h_u=H(1).
\]
Thus when $m$ is odd, \eqref{equ11} also holds.

In summary, \eqref{equ11} holds for both parities of $m$. This completes the proof. 
\end{proof}

\begin{lemma}\label{lem2.9}
Let $q$ be a positive integer, $L=2^q-1$, and let $P_L^{[\varepsilon]}$ be the monomial matrix. Then for each $\varepsilon\in\{1,-1\}$,
\begin{eqnarray*}
  \Phi_{P_L^{[\varepsilon]}}(z)
  =\varepsilon(1+z+\cdots+z^L).
\end{eqnarray*}
\end{lemma}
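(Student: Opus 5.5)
The plan is to compute $\Phi_{P_L^{[\varepsilon]}}(z)$ directly from its definition as a sum over subsets $S\subseteq[L]$, exploiting the fact that a monomial matrix has very few nonzero principal minors. Write $P=P_L^{[\varepsilon]}$ and $\rho=\rho_L$.

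\emph{Step 1: only $\rho$-invariant subsets contribute.} For $i\in S$, row $i$ of $P[S,S]$ contains the entry $\omega_i$ if $\rho(i)\in S$, and is identically zero otherwise. Hence $\det P[S,S]\neq0$ forces $\rho(S)\subseteq S$, and since $\rho$ is injective this gives $\rho(S)=S$. Conversely, if $\rho(S)=S$, then $P[S,S]$ is itself a monomial matrix with associated permutation $\rho|_S$. The Leibniz expansion then has a single surviving term, so
\[
\det P[S,S]=\operatorname{sgn}(\rho|_S)\prod_{i\in S}\omega_i .
\]
The same holds for $S^{\mathrm c}$, which is invariant whenever $S$ is.

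\emph{Step 2: each surviving term equals $\varepsilon$.} For invariant $S$, the sign is multiplicative over the disjoint pieces, as in the proof of Lemma \ref{lem2.2}(ii). Therefore
\[
\det P[S,S]\det P[S^{\mathrm c},S^{\mathrm c}]
=\operatorname{sgn}(\rho)\prod_{i=1}^{L}\omega_i
=\operatorname{sgn}(\rho)\cdot\varepsilon\operatorname{sgn}(\rho)
=\varepsilon .
\]
This works because $\omega_1=\varepsilon\operatorname{sgn}(\rho)$ and $\omega_i=1$ for $i\ge2$, and because $\operatorname{sgn}(\rho)^2=1$. So
\[
\Phi_P(z)=\varepsilon\sum_{S:\rho(S)=S}z^{|S|}.
\]

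\emph{Step 3: counting invariant subsets by size.} A $\rho$-invariant set is exactly a union of cycles of $\rho$. The cycles have lengths $1,2,4,\ldots,2^{q-1}$, so
\[
\sum_{S:\rho(S)=S}z^{|S|}=\prod_{t=0}^{q-1}\bigl(1+z^{2^t}\bigr).
\]
By uniqueness of binary expansions, this product equals $1+z+\cdots+z^{2^q-1}=S_L(z)$, which gives the claim.

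None of these steps is a real obstacle. The only point needing care is Step 1: the vanishing of $\det P[S,S]$ for non-invariant $S$, and the identification of its value for invariant $S$, including the sign computed in the induced increasing order on $S$ (this is harmless, as remarked in Lemma \ref{lem2.2}). The binary-expansion count is what dictates the choice of cycle lengths $2^t$ in the definition of $\rho_L$.
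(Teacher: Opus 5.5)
Your proposal is correct and follows essentially the same route as the paper. Both arguments restrict to $\rho_L$-invariant subsets $S$, show that each surviving product $\det P[S,S]\det P[S^{\mathrm c},S^{\mathrm c}]$ equals the constant $\operatorname{sgn}(\rho_L)\prod_i\omega_i=\varepsilon$, and count invariant subsets by size via $\prod_{t=0}^{q-1}(1+z^{2^t})=S_L(z)$.
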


\begin{proof}
Write $P=P_L^{[\varepsilon]}$. By definition of $P_L^{[\varepsilon]}$, the associated permutation of $P$ is $\rho_L$, whose disjoint cycles have lengths
$1,2,4,\ldots,2^{q-1}$.
For any $S\subseteq[L]$, the principal submatrix $P[S,S]$ has nonzero determinant iff for every $i\in S$, the column $\rho_L(i)$ containing the unique nonzero entry of row $i$ also belongs to $S$, i.e., $\rho_L(S)=S$. From the cycle decomposition of $\rho_L$, this condition is equivalent to $S$ being a union of entire cycles of $\rho_L$. Indeed, if $\rho_L(S)\ne S$, then there exists $i\in S$ with
$\rho_L(i)\notin S$, so row $i$ of $P[S,S]$ is zero, making the determinant zero. If $\rho_L(S)=S$, then $P[S,S]$ is again a monomial matrix, hence its determinant is nonzero.
When $S$ satisfies this condition, both $P[S,S]$ and $P[S^{\mathrm{c}},S^{\mathrm{c}}]$ are monomial matrices, so each Leibniz expansion has only one nonzero term. Therefore
\begin{eqnarray*}
  \det P[S,S]
  &=&\operatorname{sgn}(\rho_L|_S)\prod_{i\in S}\omega_i,\\
  \det P[S^{\mathrm{c}},S^{\mathrm{c}}]
  &=&\operatorname{sgn}(\rho_L|_{S^{\mathrm{c}}})
    \prod_{i\in S^{\mathrm{c}}}\omega_i.
\end{eqnarray*}
These two restricted permutations act on disjoint sets; their product is $\rho_L$, hence
\[
  \det P[S,S]\det P[S^{\mathrm{c}},S^{\mathrm{c}}]
  =\operatorname{sgn}(\rho_L)\prod_i\omega_i.
\]
If the indices of $S$ and $S^{\mathrm{c}}$ are interleaved in the natural order, we may apply the same permutation to the rows and columns of $P$ to move the rows and columns corresponding to $S$ to the front and those of $S^{\mathrm{c}}$ to the back. Simultaneous row and column permutations do not change the determinant. After reordering, the associated permutation of the monomial matrix is the block direct sum of $\rho_L|_S$ and $\rho_L|_{S^{\mathrm{c}}}$, so its sign is the product of the signs of the two restrictions. Denote this constant (independent of $S$) by
\[
  \gamma(P)=\operatorname{sgn}(\rho_L)\prod_i\omega_i.
\]
From $\rho_L(S)=S$, for each cycle of $\rho_L$ of length $2^j$, the set $S$ has exactly two choices: either include all elements of that cycle or none. The first choice increases $|S|$ by $2^j$, hence contributes a factor $z^{2^j}$ in the generating polynomial; the second contributes $1$. The cycles are independent, so
\[
  \Phi_P(z)=\gamma(P)\prod_{j=0}^{q-1}(1+z^{2^j}).
\]
Using the identity
\[
  (1-z)\prod_{j=0}^{q-1}(1+z^{2^j})=1-z^{2^q},
\]
or equivalently the binary expansion identity, we get
\[
  \prod_{j=0}^{q-1}(1+z^{2^j})=1+z+\cdots+z^L.
\]
By definition of $P_L^{[\varepsilon]}$,
\[
  \prod_i\omega_i=\varepsilon\,\operatorname{sgn}(\rho_L),
\]
so $\gamma(P)=\varepsilon$. Hence $\Phi_{P_L^{[\varepsilon]}}(z)=\varepsilon(1+z+\cdots+z^L)$.
\end{proof}

In general, for a $1\times1$ matrix $[c]$, we have $\Phi_{[c]}(z)=c(1+z)$, so $\Phi_{[1/2]}(z)=(1+z)/2$. Combining Lemma \ref{lem2.7}(ii) and Lemma \ref{lem2.9}, when $m$ is odd, $\Phi_{\mathcal B_m}(z)=\bigl(-S_L(z)\bigr)S_L(z)S_L(z)=-S_L(z)^3=-K_m(z)$,
and when $m$ is even,
$\Phi_{\mathcal B_m}(z)=S_L(z)^3\frac{1+z}{2}=K_m(z)$.
Thus both cases can be written uniformly as
\begin{eqnarray}\label{equ16}
  \Phi_{\mathcal B_m}(z)=(-1)^mK_m(z).
\end{eqnarray}

\begin{lemma}(Sagan, \cite{sag})\label{lem2.02}
Let $r$ and $s$ be two positive  integers and $\lambda$ a partition of $r+s$. Then
\begin{eqnarray*}
\operatorname{Res}^{S_{r+s}}_{S_r \times S_s} \chi_{\lambda}
= \sum_{\substack{\mu \vdash r \\ \nu \vdash s}}
  c_{\mu,\nu}^{\lambda} \,
  \chi_{\mu} \otimes \chi_{\nu},
\end{eqnarray*}
where $\operatorname{Res}^{S_{r+s}}_{S_r \times S_s}\chi_\lambda$ denotes the restriction of the character $\chi_\lambda$ to the Young subgroup $S_r\times S_s$, $\chi_{\mu}\otimes\chi_{\nu}$ is the outer tensor product character defined earlier, and $c_{\mu,\nu}^{\lambda}$ are the Littlewood–Richardson coefficients.
\end{lemma}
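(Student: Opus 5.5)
The plan is to derive the restriction formula from the induction product by Frobenius reciprocity, using the properties of the Frobenius character map $\operatorname{ch}$ recalled in Section~2.

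First I will fix the irreducible characters of the Young subgroup $S_r\times S_s$. The irreducible representations of a direct product of finite groups are exactly the outer tensor products of irreducibles of the factors. Hence the characters $\chi_\mu\otimes\chi_\nu$ with $\mu\vdash r$ and $\nu\vdash s$ form an orthonormal basis of the class functions on $S_r\times S_s$. Orthonormality follows from
\[
\langle\chi_\mu\otimes\chi_\nu,\chi_{\mu'}\otimes\chi_{\nu'}\rangle_{S_r\times S_s}=\langle\chi_\mu,\chi_{\mu'}\rangle_{S_r}\langle\chi_\nu,\chi_{\nu'}\rangle_{S_s},
\]
and completeness from counting: the number of conjugacy classes of $S_r\times S_s$ is the product of the class numbers of the factors. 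Consequently
\[
\operatorname{Res}^{S_{r+s}}_{S_r\times S_s}\chi_\lambda=\sum_{\mu,\nu}m_{\mu,\nu}\,\chi_\mu\otimes\chi_\nu,\qquad m_{\mu,\nu}=\bigl\langle \operatorname{Res}\chi_\lambda,\chi_\mu\otimes\chi_\nu\bigr\rangle_{S_r\times S_s}.
\]
All characters involved are integer-valued, so complex conjugation plays no role.

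Second, I will apply Frobenius reciprocity:
\[
m_{\mu,\nu}=\bigl\langle\chi_\lambda,\operatorname{Ind}_{S_r\times S_s}^{S_{r+s}}(\chi_\mu\otimes\chi_\nu)\bigr\rangle_{S_{r+s}}.
\]
Third, I will compute the induced character. As recalled before Lemma~\ref{lem2.1}, $\operatorname{ch}$ sends induction from a Young subgroup of an outer tensor product to the product of symmetric functions. This gives
\[
\operatorname{ch}\operatorname{Ind}(S^\mu\boxtimes S^\nu)=s_\mu s_\nu=\sum_{\lambda'}c^{\lambda'}_{\mu,\nu}s_{\lambda'}.
\]
Since $\operatorname{ch}$ is a linear bijection sending $\chi_{\lambda'}$ to $s_{\lambda'}$, it follows that $\operatorname{Ind}(\chi_\mu\otimes\chi_\nu)=\sum_{\lambda'}c^{\lambda'}_{\mu,\nu}\chi_{\lambda'}$. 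Orthonormality of the $\chi_{\lambda'}$ then yields $m_{\mu,\nu}=c^\lambda_{\mu,\nu}$, which is the claimed formula.

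The only step that is not purely formal is the identification $\operatorname{ch}\circ\operatorname{Ind}=$ product, which I treat as the standard property already quoted in the paper. If a self-contained argument were wanted, I would prove it by checking on power sums that the induced character formula matches the product $p_\alpha p_\beta$, using $\operatorname{ch}(f)=\frac1{n!}\sum_\pi f(\pi)p_{\mathrm{type}(\pi)}$. The rest is routine character theory.
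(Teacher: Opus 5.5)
Your proof is correct and complete. The paper gives no proof of its own and simply cites Sagan, where the result is obtained the same way you do it: the characteristic map gives $\operatorname{Ind}_{S_r\times S_s}^{S_{r+s}}(\chi_\mu\otimes\chi_\nu)=\sum_{\lambda}c^{\lambda}_{\mu,\nu}\chi_\lambda$, and Frobenius reciprocity then transfers these multiplicities to the restriction.
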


\begin{lemma}\label{lem2.4}
Let $r$ and $s$ be two positive  integers, $\lambda$ a partition of $r+s$. Let
$A=[a_{ij}]\in\mathbb{F}^{r\times r}$, $B=[b_{ij}]\in\mathbb{F}^{s\times s}$.
Then
\begin{eqnarray*}
  \operatorname{Imm}_\lambda(A\oplus B)
  =\sum_{\substack{\mu\vdash r\\\nu\vdash s}}
    c_{\mu,\nu}^{\lambda}\,
    \operatorname{Imm}_\mu(A)\operatorname{Imm}_\nu(B).
\end{eqnarray*}
\end{lemma}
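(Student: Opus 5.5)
The plan is to expand the immanant of the block matrix permutation by permutation, and then apply the restriction formula of Lemma~\ref{lem2.02} to the character values that appear.

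Set $M=A\oplus B$, an $(r+s)\times(r+s)$ matrix. Its entries $M_{ij}$ vanish whenever $i\le r<j$ or $j\le r<i$. Consider a permutation $\pi\in S_{r+s}$ whose monomial $\prod_{i}M_{i\pi(i)}$ is nonzero. Such a $\pi$ must map $\{1,\ldots,r\}$ into itself, hence onto itself, and likewise for $\{r+1,\ldots,r+s\}$. So only permutations in the Young subgroup $S_r\times S_s$ contribute, with the convention stated earlier that the first factor acts on $[r]$ and the second on $\{r+1,\ldots,r+s\}$.

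Write such a $\pi$ as $(\sigma,\tau)$ with $\sigma\in S_r$. Define $\tau\in S_s$ by letting $\tau(j)=\pi(r+j)-r$. The monomial then factors as
\[
\prod_{i=1}^{r}a_{i\sigma(i)}\prod_{j=1}^{s}b_{j\tau(j)}.
\]
This gives
\[
\operatorname{Imm}_\lambda(A\oplus B)=\sum_{\sigma\in S_r}\sum_{\tau\in S_s}\chi_\lambda(\sigma,\tau)\prod_{i}a_{i\sigma(i)}\prod_j b_{j\tau(j)}.
\]

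Next, $\chi_\lambda(\sigma,\tau)$ is the value of $\operatorname{Res}^{S_{r+s}}_{S_r\times S_s}\chi_\lambda$ at $(\sigma,\tau)$. By Lemma~\ref{lem2.02}, this value equals
\[
\sum_{\mu\vdash r,\ \nu\vdash s}c^\lambda_{\mu,\nu}\chi_\mu(\sigma)\chi_\nu(\tau),
\]
using the defining property $(\chi_\mu\otimes\chi_\nu)(\sigma,\tau)=\chi_\mu(\sigma)\chi_\nu(\tau)$. All sums involved are finite, so I substitute this expression and interchange the order of summation. For each pair $(\mu,\nu)$, the double sum over $\sigma$ and $\tau$ factors into
\[
\Bigl(\sum_{\sigma}\chi_\mu(\sigma)\prod_i a_{i\sigma(i)}\Bigr)\Bigl(\sum_\tau\chi_\nu(\tau)\prod_j b_{j\tau(j)}\Bigr)=\operatorname{Imm}_\mu(A)\operatorname{Imm}_\nu(B).
\]
This is the claimed identity.

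The only delicate point is field bookkeeping. The characters and the Littlewood–Richardson coefficients are integers, so Lemma~\ref{lem2.02}, which is proved over $\mathbb{C}$, can be read as an identity of integer-valued functions. It therefore transfers to $\mathbb{F}$ through $\mathbb{Q}\hookrightarrow\mathbb{F}$, exactly as in the remark preceding \eqref{equ5}. I expect no real obstacle beyond checking the index shift for the second block, which matches the paper's convention for embedding $S_r\times S_s$ into $S_{r+s}$.
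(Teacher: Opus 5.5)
Your proposal is correct and follows essentially the same route as the paper: restrict to permutations in the Young subgroup $S_r\times S_s$ (the only ones giving nonzero monomials), expand $\chi_\lambda(\sigma\times\tau)$ via the Littlewood--Richardson restriction formula of Lemma~\ref{lem2.02}, interchange the finite sums, and factor into $\operatorname{Imm}_\mu(A)\operatorname{Imm}_\nu(B)$. Your remark about reading the integer-valued identity in $\mathbb{F}$ via $\mathbb{Q}\hookrightarrow\mathbb{F}$ is a sensible addition that the paper leaves implicit in this proof.
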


\begin{proof}
Partition the row and column index sets of $A\oplus B$ into the two disjoint parts
\[
  I_A=\{1,\ldots,r\},
  \qquad
  I_B=\{r+1,\ldots,r+s\}.
\]
Note that the two off–diagonal blocks of $A\oplus B$ are zero. If for some permutation $\pi\in S_{r+s}$ there exists $i\in I_A$ with $\pi(i)\in I_B$, then the permutation monomial $\prod_i(A\oplus B)_{i,\pi(i)}$ contains a zero factor. Hence only permutations that stabilise both $I_A$ and $I_B$ can contribute nonzero terms. Each such permutation can be uniquely written as $\sigma\times\tau$, where $\sigma\in S_r$ acts on the first block and $\tau\in S_s$ on the second. More precisely, $\sigma\times\tau\in S_{r+s}$ is defined by
\[
  (\sigma\times\tau)(i)=
  \begin{cases}
    \sigma(i),&1\leq i\leq r,\\
    r+\tau(i-r),&r+1\leq i\leq r+s.
  \end{cases}
\]
Thus
\[
  \operatorname{Imm}_\lambda(A\oplus B)
  =\sum_{\sigma\in S_r}\sum_{\tau\in S_s}
  \chi_{\lambda}(\sigma\times\tau)
  \prod_{i=1}^r a_{i\sigma(i)}
  \prod_{j=1}^s b_{j\tau(j)}.
\]
By Lemma \ref{lem2.02}, the restriction of the irreducible character $\chi_\lambda$ to the Young subgroup $S_r\times S_s$ is
\[
  \operatorname{Res}^{S_{r+s}}_{S_r\times S_s}\chi_\lambda
  =\sum_{\substack{\mu\vdash r\\\nu\vdash s}}
    c_{\mu,\nu}^{\lambda}\,\chi_\mu\otimes\chi_\nu.
\]
By definition of the outer tensor product character, $(\chi_\mu\otimes\chi_\nu)(\sigma,\tau)=\chi_\mu(\sigma)\chi_\nu(\tau)$. Hence for every $\sigma\in S_r$ and $\tau\in S_s$,
\[
  \chi_{\lambda}(\sigma\times\tau)
  =\sum_{\substack{\mu\vdash r\\\nu\vdash s}}
    c_{\mu,\nu}^{\lambda}\,\chi_{\mu}(\sigma)\chi_{\nu}(\tau).
\]
Since all sums are finite, we may sum over $\mu,\nu$ first and then over $\sigma,\tau$:
\begin{eqnarray*}
  \operatorname{Imm}_\lambda(A\oplus B)
  &=&\sum_{\substack{\mu\vdash r\\\nu\vdash s}}
    c_{\mu,\nu}^{\lambda}
    \left(\sum_{\sigma\in S_r}\chi_{\mu}(\sigma)
      \prod_{i=1}^r a_{i\sigma(i)}\right)
    \left(\sum_{\tau\in S_s}\chi_{\nu}(\tau)
      \prod_{j=1}^s b_{j\tau(j)}\right)  \\
  &=&\sum_{\substack{\mu\vdash r\\\nu\vdash s}}
    c_{\mu,\nu}^{\lambda}\,\operatorname{Imm}_\mu(A)\operatorname{Imm}_\nu(B).
\end{eqnarray*}
This completes the proof. 
\end{proof}

\section{Proofs of Theorems \ref{thm1} and \ref{thm2}}

Before proving Theorems \ref{thm1} and \ref{thm2}, we first give two important lemmas.

\begin{lemma}\label{lem2.10}
There exist a function $N=N(m)$ and a family of constant matrices $(\mathcal B_m)_{m\geq1}$, where $N(m)=O(m)$, such that for every positive integer $m$ and every $m\times m$ generic variable matrix $X=(x_{ij})$,
\begin{eqnarray*}
  \operatorname{Imm}_{(2^{N(m)})}(X\oplus\mathcal B_m)
  =\operatorname{Per}_{-2,m}(X).
\end{eqnarray*}
Moreover, $\bigl(\operatorname{Per}_{-2,m}\bigr)_{m\geq1}\leq_p
\bigl(\operatorname{Imm}_{(2^n)}\bigr)_{n\geq1}$.
\end{lemma}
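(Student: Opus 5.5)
The plan is to combine Lemma~\ref{lem2.2}(ii), Lemma~\ref{lem2.7}, Lemma~\ref{lem2.8} and the identity \eqref{equ16}. Fix $m$, and take $q(m),L(m),K_m,r(m)$, $N(m)=(m+r(m))/2$ and $\mathcal B_m$ exactly as defined before Lemma~\ref{lem2.8}. The matrix $Y:=X\oplus\mathcal B_m$ has size $m+r(m)=2N(m)$, because $\dim\mathcal B_m=r(m)$; Lemma~\ref{lem2.8} guarantees that $N(m)$ is an integer. Lemma~\ref{lem2.2}(ii) with $n=N$ then gives
\[
  \operatorname{Imm}_{(2^{N})}(Y)=D_N(Y)-D_{N-1}(Y)=\bigl([z^N]-[z^{N-1}]\bigr)\Phi_Y(z).
\]

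Next, I would apply Lemma~\ref{lem2.7}(ii) together with \eqref{equ16} to factor the generating polynomial as
\[
  \Phi_Y(z)=\Phi_X(z)\Phi_{\mathcal B_m}(z)=(-1)^m\Phi_X(z)K_m(z).
\]
I then apply Lemma~\ref{lem2.8} with $H=\Phi_X$. The hypothesis $H(z)=z^mH(z^{-1})$ is Lemma~\ref{lem2.7}(i). One point needs care: $X$ is a generic variable matrix, so $H$ lies in $\mathbb{F}[x_{ij}][z]$ rather than $\mathbb{F}[z]$. The proof of Lemma~\ref{lem2.8} is purely linear in the coefficients of $H$, using only their symmetry, so it holds verbatim over the coefficient ring $\mathbb{F}[x_{ij}]$. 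Alternatively, one can argue that both sides are polynomial identities and specialize the variables to field values, since $\mathbb{F}$ is infinite. Either way,
\[
  \bigl([z^N]-[z^{N-1}]\bigr)\Phi_X(z)K_m(z)=\Phi_X(1)=(-1)^m\operatorname{Per}_{-2,m}(X)
\]
by Lemma~\ref{lem2.7}(iii). The two signs $(-1)^m$ cancel, which yields $\operatorname{Imm}_{(2^{N(m)})}(X\oplus\mathcal B_m)=\operatorname{Per}_{-2,m}(X)$.

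For the size bound, $q(m)$ is minimal subject to $2^q-1\ge 2m+7$, so $L(m)\le 2(2m+7)=4m+14$. Hence $r(m)\le 3L+1=O(m)$ and $N(m)=(m+r)/2=O(m)$. For the $p$-projection, $X\oplus\mathcal B_m$ is obtained from the generic $2N\times 2N$ matrix by substituting the variables $x_{ij}$ in the top-left block and constants from $\mathbb{F}$ elsewhere (namely $0,\pm1,1/2$ and signs). Taking $t(m)=N(m)$, which is $p$-bounded, gives $(\operatorname{Per}_{-2,m})\le_p(\operatorname{Imm}_{(2^n)})$.

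The main obstacle is already handled by Lemma~\ref{lem2.8}: the coefficient-extraction identity that requires $L$ to be large relative to $m$. What remains is the bookkeeping. One must check that $\dim\mathcal B_m=r(m)$ in both parity cases: this is $3L$ for odd $m$ and $3L+1$ for even $m$, matching $\deg K_m$. One must also check that the sign in \eqref{equ16} cancels correctly with the sign in Lemma~\ref{lem2.7}(iii), and that Lemma~\ref{lem2.8} legitimately extends to polynomial coefficients.
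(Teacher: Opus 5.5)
Your proposal is correct and follows essentially the same route as the paper's proof. Both arguments use Lemma~\ref{lem2.2}(ii) to write the immanant as $([z^N]-[z^{N-1}])\Phi_{X\oplus\mathcal B_m}(z)$, then Lemma~\ref{lem2.7}(ii) with \eqref{equ16} to factor out $(-1)^mK_m$, Lemma~\ref{lem2.8} with $H=\Phi_X$, Lemma~\ref{lem2.7}(iii) to cancel the sign, and the minimality of $q(m)$ to get $N(m)=O(m)$. You also note that Lemma~\ref{lem2.8} is stated for $H\in\mathbb{F}[z]$ but is applied here to coefficients in $\mathbb{F}[x_{ij}]$; the paper passes over this silently, and your justification (linearity in the coefficients, or specialization over the infinite field $\mathbb{F}$) handles it correctly.
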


\begin{proof}
Fix $m$, and write $q=q(m)$, $L=L(m)$, $r=r(m)$ and $N=N(m)$. Take the constant block $\mathcal B_m$ defined above. Since $\dim\mathcal B_m=r$ and $N=(m+r)/2$, the direct sum matrix $X\oplus\mathcal B_m$ has size $m+r=2N$.
Hence its size equals the size of the two–column rectangular partition $(2^N)$, so we may apply Lemma \ref{lem2.2}(ii) to obtain
$\operatorname{Imm}_{(2^N)}(X\oplus\mathcal B_m)=D_N(X\oplus\mathcal B_m)-D_{N-1}(X\oplus\mathcal B_m)$.
By the definition of the generating polynomial
$\Phi_Z(z)=\sum_kD_k(Z)z^k$ (for any square matrix $Z$), $D_N(Z)$ and $D_{N-1}(Z)$ are respectively the coefficients of $z^N$ and $z^{N-1}$. Thus the right–hand side above can be written as $\bigl([z^N]-[z^{N-1}]\bigr)\Phi_{X\oplus\mathcal B_m}(z)$.
By Lemma \ref{lem2.7}(ii), $\Phi$ is multiplicative under direct sums. Hence we may separate the blocks $X$ and $\mathcal B_m$; combining with equation \eqref{equ16} gives
$\Phi_{X\oplus\mathcal B_m}(z)=\Phi_X(z)\Phi_{\mathcal B_m}(z)=(-1)^m\Phi_X(z)K_m(z)$.
By Lemma \ref{lem2.7}(i), we have $\Phi_X(z)=z^m\Phi_X(z^{-1})$, which means $H(z)=\Phi_X(z)$ satisfies the coefficient symmetry condition required in Lemma \ref{lem2.8}. Combining the above identities yields
\begin{eqnarray*}
  \operatorname{Imm}_{(2^N)}(X\oplus\mathcal B_m)
  &=&(-1)^m\bigl([z^N]-[z^{N-1}]\bigr)
    \Phi_X(z)K_m(z) \\
  &=&(-1)^m\Phi_X(1)=\operatorname{Per}_{-2,m}(X).
\end{eqnarray*}
Here the second equality uses Lemma \ref{lem2.8} on the two middle coefficients:
$$\bigl([z^N]-[z^{N-1}]\bigr)\Phi_X(z)K_m(z)=\Phi_X(1),$$
and the third uses Lemma \ref{lem2.7}(iii): $\Phi_X(1)=(-1)^m\operatorname{Per}_{-2,m}(X)$. In the matrix $X\oplus\mathcal B_m$, the upper–left block is exactly $X$, the two off–diagonal blocks are zero, and the lower–right block $\mathcal B_m$ consists of constants in $\mathbb{F}$. Hence, starting from a generic $2N\times 2N$ matrix $Y$, we need only assign the upper–left block to the corresponding variables of $X$, set the two off–diagonal blocks to $0$, and set the lower–right block to the corresponding constants of $\mathcal B_m$, thus obtaining $X\oplus\mathcal B_m$. By the minimality of $q=q(m)$, we have $2^{q-1}-1<2m+7$. A simple calculation gives $2^q<4m+16$, hence $L=2^q-1<4m+15$.
Depending on the parity of $m$, $r(m)$ equals $3L$ or $3L+1$, so $r(m)\leq3L+1=O(m)$. Therefore $N(m)=\frac{m+r(m)}{2}=O(m)$.
Moreover, $q(m)$, $L(m)$ and the positions of each constant in $\mathcal B_m$ are uniquely determined by $m$. Hence $m\mapsto N(m)$ is an integer–valued $p$-bounded index function, and $(\mathcal B_m)_{m\geq1}$ forms a well–defined family of constant matrices. Since $N(m)=O(m)$, the size $2N(m)$ of the resulting matrix grows at most linearly in $m$, so the above projection is a $p$-projection.
\end{proof}

\begin{lemma}\label{lem2.5}
Let $n$ be a positive integer and $A=(a_{ij})$ an $n\times n$ generic variable matrix. Then
\begin{eqnarray*}
  \operatorname{Imm}_{(n,n)}(A\oplus U_n)=\operatorname{per}(A).
\end{eqnarray*}
Moreover, $(\operatorname{per}_n)_{n\geq1}\leq_p\bigl(\operatorname{Imm}_{(n,n)}\bigr)_{n\geq1}$.
\end{lemma}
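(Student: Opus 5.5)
We apply Lemma \ref{lem2.4} with $r=s=n$, $\lambda=(n,n)$, $A$ the generic matrix and $B=U_n$. This gives
\[
  \operatorname{Imm}_{(n,n)}(A\oplus U_n)
  =\sum_{\mu\vdash n,\ \nu\vdash n}c_{\mu,\nu}^{(n,n)}\operatorname{Imm}_\mu(A)\operatorname{Imm}_\nu(U_n).
\]
By \eqref{equ5}, $\operatorname{Imm}_\nu(U_n)=0$ unless $\nu=(n)$, in which case it equals $1$. The sum therefore collapses to
\[
  \sum_{\mu\vdash n}c_{\mu,(n)}^{(n,n)}\operatorname{Imm}_\mu(A).
\]

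The key step is to compute $c_{\mu,(n)}^{(n,n)}$. Since $s_\mu s_{(n)}=s_\mu h_n$, Pieri's rule shows that $c_{\mu,(n)}^{(n,n)}=1$ exactly when $(n,n)/\mu$ is a horizontal strip of size $n$, and $0$ otherwise. First, $\mu\subseteq(n,n)$ and $|\mu|=n$ force $\mu=(n-j,j)$ for some $0\le j\le n/2$. The skew shape $(n,n)/(n-j,j)$ then has $j$ boxes in the first row, in columns $n-j+1,\dots,n$, and $n-j$ boxes in the second row, in columns $j+1,\dots,n$. These column ranges overlap whenever $j\ge1$, since $n-j+1\ge j+1$ holds and column $n$ lies in both. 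So the shape is a horizontal strip only for $j=0$, that is, $\mu=(n)$. Hence only $\mu=(n)$ survives, and $\operatorname{Imm}_{(n)}(A)=\operatorname{per}(A)$, which proves the identity.

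As an alternative route, Lemma \ref{lem2.2}(i) would give the same result. We have $P_n(A\oplus U_n)-P_{n-1}(A\oplus U_n)$, and a permanent of a principal minor of $U_n$ of size $k$ equals $k!$ times the entries, with $1/n!$ appearing when row $1$ is included. Counting shows that only $S=$ the $A$-block or the $U_n$-block contribute. This route is messier, so we stay with the Littlewood--Richardson route.

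For the $p$-projection, $A\oplus U_n$ is a $2n\times 2n$ matrix. Its upper-left block consists of variables, its off-diagonal blocks are zero, and its lower-right block consists of the constants $1/n!$ and $1$, which lie in $\mathbb{F}$ because the characteristic is zero. Substituting these into the generic $2n\times2n$ matrix gives $\operatorname{per}_n$ as a projection of $R_n=\operatorname{Imm}_{(n,n)}$ with $t(n)=n$, so $(\operatorname{per}_n)\le_p(\operatorname{Imm}_{(n,n)})$. The only real obstacle is verifying the horizontal-strip condition, in particular the case $n=1$ or small $j$; the overlap argument at column $n$ handles every $j\ge1$ uniformly.
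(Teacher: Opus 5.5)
Your proof is correct and follows the paper's argument: Lemma \ref{lem2.4} with $r=s=n$, the collapse to $\nu=(n)$ via \eqref{equ5}, and Pieri's rule showing $c_{\mu,(n)}^{(n,n)}\neq 0$ only for $\mu=(n)$. Your check that column $n$ receives two boxes of $(n,n)/(n-j,j)$ for every $j\ge1$ is a more explicit form of the paper's count, where $n$ boxes in $n$ columns force one box per column and hence $\mu=(n)$. Your aside on the $P_n-P_{n-1}$ route is inaccurately worded: individual subsets $S$ do not vanish. Instead $P_k(A\oplus U_n)=\sum_{a=0}^{k}P_a(A)$ for $k\le n$, and the difference telescopes to $P_n(A)=\operatorname{per}(A)$. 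Since you do not rely on that aside, the proof stands.
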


\begin{proof}
The two diagonal blocks of $A\oplus U_n$ both have size $n$, and the partition $(n,n)$ has size $2n$. Thus, taking $r=s=n$ and $\lambda=(n,n)$ in Lemma \ref{lem2.4} gives
\[
  \operatorname{Imm}_{(n,n)}(A\oplus U_n)
  =\sum_{\substack{\mu\vdash n\\\nu\vdash n}}
    c_{\mu,\nu}^{(n,n)}\operatorname{Imm}_\mu(A)\operatorname{Imm}_\nu(U_n).
\]
By equation \eqref{equ5}, $\operatorname{Imm}_\nu(U_n)$ is $1$ only for $\nu=(n)$ and $0$ otherwise. Hence
\begin{eqnarray}\label{equ6}
  \operatorname{Imm}_{(n,n)}(A\oplus U_n)
  =\sum_{\mu\vdash n}c_{\mu,(n)}^{(n,n)}\operatorname{Imm}_\mu(A).
\end{eqnarray}
Since $(n)$ is a one–row partition, Pieri's rule says that $c_{\mu,(n)}^{(n,n)}=1$ if and only if the skew Young diagram $(n,n)/\mu$ is a horizontal strip of size $n$; otherwise the coefficient is $0$. Here $(n,n)/\mu$ denotes the boxes remaining after removing the Young diagram $\mu$ from the upper–left corner of the two–row rectangle $(n,n)$. The rectangle $(n,n)$ has $n$ columns. A horizontal strip requires at most one new box per column, and $(n,n)/\mu$ has exactly $n$ boxes, so each column must contain exactly one box. Each column of the original rectangle has two boxes; hence, after removing the skew diagram, the remaining $\mu$ must also occupy exactly one box in each column. Since $\mu$ is a left–justified Young diagram with nonincreasing row lengths, these boxes can only form a full first row, i.e., $\mu=(n)$. Conversely, if $\mu=(n)$, then $(n,n)/\mu$ consists exactly of the $n$ boxes in the second row, one per column, which is indeed a horizontal strip. Therefore
\[
  c_{\mu,(n)}^{(n,n)}=\begin{cases}
    1,&\mu=(n),\\
    0,&\mu\ne(n).
  \end{cases}
\]
Substituting this into equation \eqref{equ6}, the sum reduces to the single term $\mu=(n)$:
\[
  \operatorname{Imm}_{(n,n)}(A\oplus U_n)=\operatorname{Imm}_{(n)}(A)=\operatorname{per}(A).
\]
The last equality holds because the partition $(n)$ corresponds to the trivial character of $S_n$, whose value is identically $1$, so the corresponding immanant is precisely the permanent. In a generic $2n\times 2n$ matrix, setting the upper–left block to $A$, the two off–diagonal blocks to zero, and the lower–right block to $U_n$ yields exactly $A\oplus U_n$. Here the upper–left block uses only the original variables $a_{ij}$, while the other three blocks use only constants $0$, $1$ and $1/n!$. Since $\mathbb{F}$ has characteristic zero, $n!$ is invertible in $\mathbb{F}$, so all these constants belong to $\mathbb{F}$. Since the resulting matrix has size $2n$, this construction gives a projection, and it forms a $p$-projection.
\end{proof}

\noindent\textbf{Remark.}
If we replace $U_n$ by the all–one matrix $J_n$, the same argument yields
$\operatorname{Imm}_{(n,n)}(A\oplus J_n)=n!\,\operatorname{per}(A)$.
Taking the first row of $U_n$ to be $1/n!$ exactly removes this scalar factor, giving
$\operatorname{Imm}_{(n,n)}(A\oplus U_n)=\operatorname{per}(A)$.

\textbf{Proof of Theorem \ref{thm1}.}
By Lemma \ref{lem2.3}, we have $\bigl(\operatorname{Imm}_{(2^n)}\bigr)_{n\geq1}\in\mathrm{VNP}$. Hence it remains to prove
$(\operatorname{per}_n)_{n\geq1}\leq_p\bigl(\operatorname{Imm}_{(2^n)}\bigr)_{n\geq1}$.
Let $A=[a_{ij}]$ be an $n\times n$ generic variable matrix. By Lemma \ref{lem2.6}, there exists a matrix $\widehat M_n(A)$ of size $m(n)=6n^2+2n$, each entry of which is either one of the variables $a_{ij}$ or a constant from $\mathbb{F}$, such that $\operatorname{Per}_{-2,m(n)}\bigl(\widehat M_n(A)\bigr)=\operatorname{per}(A)$.
Now take $X=\widehat M_n(A)$ in Lemma \ref{lem2.10}. By Lemma \ref{lem2.10}, there exist a constant matrix $\mathcal B_{m(n)}$ and an integer $N(n)=O(m(n))$ such that
\[
  \operatorname{Imm}_{(2^{N(n)})}
  \bigl(\widehat M_n(A)\oplus\mathcal B_{m(n)}\bigr)
  =\operatorname{Per}_{-2,m(n)}\bigl(\widehat M_n(A)\bigr)
  =\operatorname{per}(A).
\]
Define $\Pi_n(A):=\widehat M_n(A)\oplus\mathcal B_{m(n)}$.
From the size relation $m(n)+\dim\mathcal B_{m(n)}=2N(n)$ in Lemma \ref{lem2.10}, $\Pi_n(A)$ has size $2N(n)$. This agrees with the size of the partition $(2^{N(n)})$, so $\operatorname{Imm}_{(2^{N(n)})}(\Pi_n(A))$ is well–defined. Every entry of $\widehat M_n(A)$ is either one of the variables $a_{ij}$ or a constant from $\mathbb{F}$. The entries of the constant matrix $\mathcal B_{m(n)}$ and the zero off–diagonal blocks are also constants in $\mathbb{F}$. Thus the map $A\mapsto\Pi_n(A)$ is a projection. Since $m(n)=O(n^2)$ and $N(n)=O(m(n))=O(n^2)$, the size of the matrix is polynomially bounded in $n$, so $(\Pi_n)_{n\geq1}$ forms a $p$-projection.
Over a field of characteristic zero, the permanent family is VNP–complete. The above identity proves
$(\operatorname{per}_n)_{n\geq1}\leq_p
  \bigl(\operatorname{Imm}_{(2^n)}\bigr)_{n\geq1}$.
Therefore, for any polynomial family $f\in\mathrm{VNP}$, by transitivity of $p$-projections,
\[
  f\leq_p(\operatorname{per}_n)_{n\geq1}\leq_p
  \bigl(\operatorname{Imm}_{(2^n)}\bigr)_{n\geq1}.
\]
This shows that the immanant family is VNP–hard. Since by Lemma \ref{lem2.3} it belongs to VNP, it is VNP–complete under $p$-projections. Moreover, the above construction gives $N(n)=O(n^2)$ and
$\operatorname{Imm}_{(2^{N(n)})}(\Pi_n(A))=\operatorname{per}(A)$.\qed

\textbf{Proof of Theorem \ref{thm2}.}
By Lemma \ref{lem2.3}, we have $\bigl(\operatorname{Imm}_{(n,n)}\bigr)_{n\geq1}\in\mathrm{VNP}$.
Take the constant matrix $U_n\in\mathbb{F}^{n\times n}$ constructed in Lemma \ref{lem2.5}. For any $n\times n$ generic variable matrix $A$, Lemma \ref{lem2.5} gives $\operatorname{Imm}_{(n,n)}(A\oplus U_n)=\operatorname{per}(A)$.
Starting from a generic $2n\times 2n$ matrix, setting the upper–left block to $A$, the two off–diagonal blocks to zero, and the lower–right block to the constant matrix $U_n$ yields $A\oplus U_n$. This construction uses only the variables in $A$ and constants from $\mathbb{F}$, and the resulting matrix has size $2n$. Hence, 
$ (\operatorname{per}_n)_{n\geq1}\leq_p
  \bigl(\operatorname{Imm}_{(n,n)}\bigr)_{n\geq1}$.
Because the permanent family is VNP–complete, for any $f\in\mathrm{VNP}$,
\[
  f\leq_p(\operatorname{per}_n)_{n\geq1}\leq_p
  \bigl(\operatorname{Imm}_{(n,n)}\bigr)_{n\geq1}.
\]
Thus the family is VNP–hard. Since by Lemma \ref{lem2.3} it belongs to VNP, $\bigl(\operatorname{Imm}_{(n,n)}\bigr)_{n\geq1}$ is VNP–complete under $p$-projections. The identity concerning $U_n$ in the theorem has already been given by Lemma \ref{lem2.5}.\qed

\section{Conclusion}

This paper studies the computational complexity of immanants. We prove that the polynomial families $\bigl(\operatorname{Imm}_{(2^n)}\bigr)_{n\geq 1}$ and $\bigl(\operatorname{Imm}_{(n,n)}\bigr)_{n\geq 1}$ are both VNP-complete under $p$-projections. The computational complexity of immanants corresponding to other partitions deserves further investigation.

\noindent{\bf Data Availability}\\
{No data were used to support this study.}

\end{document}